\documentclass[11pt,reqno]{amsart}

\usepackage[T1]{fontenc}
\usepackage{lmodern}
\usepackage{microtype}
\usepackage{mathtools,amssymb}
\usepackage{booktabs}
\usepackage{tabularx,array}
\usepackage{float}
\usepackage{enumitem}
\usepackage{xcolor}
\usepackage[colorlinks=true,linkcolor=blue!55!black,citecolor=blue!55!black,
  urlcolor=blue!55!black]{hyperref}
\usepackage[capitalize,noabbrev]{cleveref}

\allowdisplaybreaks
\numberwithin{equation}{section}
\newtheorem{theorem}{Theorem}[section]
\newtheorem{proposition}[theorem]{Proposition}
\newtheorem{lemma}[theorem]{Lemma}

\theoremstyle{definition}

\newtheorem{remark}[theorem]{Remark}

\newcommand{\R}{\mathbb R}
\newcommand{\C}{\mathbb C}
\newcommand{\Q}{\mathbb Q}
\newcommand{\Z}{\mathbb Z}
\newcommand{\T}{\mathbb T}
\newcommand{\cS}{\mathcal S}
\newcommand{\cZ}{\mathcal Z}
\newcommand{\cZtwo}{\mathcal Z_2}
\newcommand{\ip}[2]{\langle #1,#2\rangle}
\newcommand{\norm}[1]{\lVert #1\rVert}
\newcommand{\abs}[1]{\lvert #1\rvert}
\newcommand{\e}{\mathrm e}
\newcommand{\ii}{\mathrm i}
\newcommand{\Frob}{\mathrm F}

\title[An intrinsically subcritical four-point counterexample]
  {An intrinsically subcritical four-point counterexample\\
   to the HRT conjecture}

\author{Vignon Oussa}
\address{Department of Mathematics, Bridgewater State University,
Bridgewater, Massachusetts, USA}
\email{voussa@bridgew.edu}
\date{August 6, 2026}

\subjclass[2020]{Primary 42C15; Secondary 37D30, 65G30}
\keywords{HRT conjecture, time--frequency shifts, Weyl operators,
  vector Zak transform, dominated cocycle, validated numerics}

\begin{document}

\begin{abstract}
Building on the vector-Zak and cohomological framework developed by
Faulhuber, Petersen, van Velthoven, and Voigtlaender in their twelve-point
counterexample, we give a computer-assisted four-point counterexample with a
nonzero complex-valued Schwartz window.  Every symplectic triangle determinant
of the explicit configuration has absolute value below one, placing it in the
intrinsically subcritical regime.
\end{abstract}

\maketitle

\section{Introduction}

For $z=(x,\omega)\in\R^2$, let the Weyl shift $\rho(z)$ on $L^2(\R)$ be
\begin{equation}\label{eq:weyl-definition}
  (\rho(x,\omega)f)(t)
  =\e^{2\pi\ii\omega(t-x/2)}f(t-x).
\end{equation}
The Heil--Ramanathan--Topiwala conjecture asserted that, for every nonzero
$f\in L^2(\R)$ and every finite set of distinct phase-space points
$z_1,\dots,z_n$, the functions $\rho(z_1)f,\dots,\rho(z_n)f$ are linearly
independent \cite{HRT}.  The Weyl normalization differs from the usual
time--frequency shifts only by unimodular constants.

In 2026, Faulhuber, Petersen, van Velthoven, and Voigtlaender disproved the
assertion by constructing twelve dependent time--frequency shifts of a
nonzero Schwartz function \cite{FPPV2026}.  Their work introduced the decisive
combination of ingredients used here: a two-component vector Zak model
\cite{ZibZee97}, an irrational
cubic torus translation, a smooth nowhere-zero reference section, a
multiplier-winding argument, and smooth cohomological reconstruction.  The
present paper retains that architecture but replaces their eleven-term Weyl
polynomial by the three-term half-lattice polynomial
\begin{equation}\label{eq:lattice-polynomial-intro}
  I+\frac35\rho(1,0)+\frac35\rho(0,1/2).
\end{equation}
Its invariant line is obtained from a rigorous sixteen-step domination
certificate, and a two-gauge half-plane argument controls the multiplier
winding.  Together these mechanisms reduce the configuration cardinality
from twelve points to four.

Set
\begin{equation}\label{eq:parameters}
 \vartheta=\sqrt[3]{2},\qquad
 \alpha=\vartheta-1,\qquad
 \beta=\vartheta^2-1,\qquad
 \zeta=(\alpha,\beta/2).
\end{equation}

\begin{theorem}[Main theorem]\label{thm:main}
There exist a nonzero complex-valued function $g\in\cS(\R)$ and a number
$\lambda\in\C\setminus\{0\}$ such that
\begin{equation}\label{eq:eigen-relation}
 \left[I+\frac35\rho(1,0)+\frac35\rho(0,1/2)\right]
 \rho(\zeta)g=\lambda g.
\end{equation}
Equivalently,
\begin{align}\label{eq:four-term-relation}
 -\lambda g
 &+\rho(\alpha,\beta/2)g
 +\frac35\e^{-\pi\ii\beta/2}\rho(1+\alpha,\beta/2)g \notag\\
 &+\frac35\e^{\pi\ii\alpha/2}
       \rho\bigl(\alpha,(1+\beta)/2\bigr)g=0.
\end{align}
The four points in \eqref{eq:four-term-relation} are distinct, no three are
collinear, and every absolute symplectic triangle determinant is strictly
between zero and one.
\end{theorem}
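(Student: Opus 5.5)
The plan is to separate the statement into an elementary algebraic part and an analytic existence part. The equivalence of \eqref{eq:eigen-relation} with \eqref{eq:four-term-relation} is pure bookkeeping. With the normalization \eqref{eq:weyl-definition}, Weyl shifts compose as $\rho(z)\rho(w)=\e^{\pi\ii\sigma(z,w)}\rho(z+w)$, where $\sigma((x,\omega),(x',\omega'))=\omega x'-x\omega'$. Expanding $\rho(1,0)\rho(\alpha,\beta/2)$ and $\rho(0,1/2)\rho(\alpha,\beta/2)$ gives the phases $\e^{-\pi\ii\beta/2}$ and $\e^{\pi\ii\alpha/2}$. The four points are therefore $0$, $\zeta$, $\zeta+(1,0)$ and $\zeta+(0,1/2)$. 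For the geometric claims, the triangle determinants $\abs{\sigma(b-a,c-a)}$ can be written explicitly in $\alpha,\beta$:
\[
\abs{\sigma((1,0),(0,1/2))}=1/2,\qquad \abs{\alpha/2-\beta/2},\qquad \abs{\alpha/2},\qquad \abs{\beta/2},
\]
with the sign conventions checked case by case. Since $\alpha\approx0.26$ and $\beta\approx0.587$, all four values lie in $(0,1)$. Nonvanishing also gives distinctness and non-collinearity, because $1,\vartheta,\vartheta^2$ are linearly independent over $\Q$.

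The substance is the existence of $g$. I would follow the architecture of \cite{FPPV2026}. The lattice part $P=I+\tfrac35\rho(1,0)+\tfrac35\rho(0,1/2)$ commutes with the Weyl shifts of the lattice $\Z\times 2\Z$, up to characters. A vector Zak transform $Z$ adapted to the half-lattice $\Z\times\tfrac12\Z$ is unitary onto sections of a rank-two bundle over $\T^2$. It turns $P$ into multiplication by a smooth $2\times2$ matrix function $M(\theta)$. Conjugating $\rho(\zeta)$ by $Z$ gives a translation $\theta\mapsto\theta+\tau$ with $\tau$ built from $(\alpha,\beta)$, composed with a smooth matrix/phase factor. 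The eigen-equation then becomes a twisted cohomological equation $A(\theta)G(\theta+\tau)=\lambda G(\theta)$ for a smooth $\C^2$-valued quasi-periodic section $G$, where $A=M\cdot(\text{phase})$ is a smooth $GL_2$ cocycle over an irrational toral translation. Because $1,\vartheta,\vartheta^2$ form a cubic field basis, $\tau$ is Diophantine, by Schmidt/Liouville-type bounds for cubic irrationals.

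I would solve the twisted equation in three steps. First, establish domination: find a continuous cone field that $A$ maps strictly inside itself. This is certified on a finite grid with interval arithmetic for the sixteen-fold composition $A^{(16)}$ mentioned in the introduction. It yields a smooth invariant line field $L(\theta)$ and a nowhere-zero smooth section $v(\theta)\in L(\theta)$ with $A(\theta)v(\theta+\tau)=a(\theta)v(\theta)$ for a smooth nonvanishing scalar multiplier $a$. Second, show that $a$ has zero winding (degree) in both torus directions, after accounting for the quasi-periodicity twist. For this I would use the two-gauge half-plane argument: cover $\T^2$ by two regions, and in each region a rigorous bound shows that $a$, in a suitable gauge, takes values in an open half-plane. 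The winding is then zero, and $\log a$ is a smooth, correctly periodic function. Third, solve the scalar cohomological equation $\phi(\theta+\tau)-\phi(\theta)=\log\lambda-\log a(\theta)$, choosing $\log\lambda$ as the mean of $\log a$. Fourier coefficients decay rapidly and the small divisors are polynomially bounded by Diophantine-ness, so $\phi$ is smooth. Then $G=\e^{\phi}v$ solves the equation, and $g=Z^{-1}G$ is a nonzero Schwartz function, since smooth quasi-periodic sections correspond to $\cS(\R)$.

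I expect the main obstacle to be the first two steps. The domination certificate must be rigorous uniformly over the continuum $\T^2$, so the grid computation needs Lipschitz control of $A$ between grid points. The winding/gauge argument must be compatible with the quasi-periodic boundary conditions of the Zak bundle. Small nonzero winding would obstruct the reconstruction entirely. I would first try the half-plane test on the computed $a$ with interval bounds, and fall back on a direct numerical argument-principle count if that test fails.
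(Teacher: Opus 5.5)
Your derivation of the four-term relation and your overall plan match the paper. The real gap is your claim that domination "yields a smooth invariant line field $L(\theta)$ and a nowhere-zero smooth section $v(\theta)\in L(\theta)$". Domination does not supply such a $v$. Without it there is no scalar multiplier $a$ and no scalar cohomological equation.

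The obstruction is topological and cannot be avoided here. The sewing matrices have $\det U_1(\omega)=-\e^{2\pi\ii\omega}$ and $\det U_2=-1$, so the rank-two vector-Zak bundle has first Chern number $\pm1$. Its determinant bundle is $E^s\otimes E^u$, hence $c_1(E^s)+c_1(E^u)=\pm1$. So at least one invariant line is a nontrivial line bundle, and every continuous section of it vanishes somewhere.

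This rules out that line entirely. A continuous solution of $A(\theta)G(\theta+\tau)=\lambda G(\theta)$ that vanishes at one point vanishes on the dense orbit $\theta_0+\Z\tau$, hence identically. So a nontrivial invariant line carries no nonzero eigensection at all. A cone field that $A$ maps into itself produces the dominant line $E^u\approx\ell_1(P_{16})$. In this example that is exactly the nontrivial one, because the paper shows $E^s$ is trivial.

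The paper instead works with the stable line $E^s$, built by the inverse graph transform. It adds a separate certificate, (C3): the smooth nowhere-zero reference section $\chi$ satisfies $\abs{\ip{\chi(z)}{C(z)}}>1/2$. Combined with the slope-$1/200$ localization of $E^s$ about the finite center $C(z)$, this shows $P_s\chi$ never vanishes and gives $v_{\mathrm{sm}}=P_s\chi/\ip{\chi}{P_s\chi}$. You also assert smoothness of the line field without argument. Domination alone gives only H\"older regularity; the paper gets $C^\infty$ because the base map is a translation.

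Your reading of the two-gauge argument also misses its point. In the paper both gauges are global normalizations of the same line $E^s$. A gauge cannot be changed region by region, since the multiplier relation couples $\theta$ with $\theta+\tau$. The half-plane bound $\operatorname{Re}(\e^{-3\pi\ii/7}q_{\mathrm{lin}})>19/180$ is proved on all of $\T^2$ for the linear-step gauge $\chi_{\mathrm{lin}}$. Its sewn extension is only continuous across $x\in\Z$, so $\log q_{\mathrm{lin}}$ cannot yield a smooth, hence Schwartz, solution. Only the winding class is transferred to the smooth gauge, via $q_{\mathrm{sm}}=q_{\mathrm{lin}}\,a(Tz)/a(z)$ and $[a\circ T/a]=T^*[a]-[a]=0$, since a translation is homotopic to the identity.

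Because the exact line is not computable, a half-plane test on a ``computed $a$'' also needs a transfer estimate. In the paper this is $\abs{q_{\mathrm{lin}}-q_{\mathrm{ref}}}\le 2/45$, which comes from the cone width about $C$.

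Finally, one of your triangle determinants is wrong. For $\{0,\zeta+(1,0),\zeta+(0,1/2)\}$ it is $(1+\alpha+\beta)/2\approx0.92$, not $\abs{\alpha-\beta}/2$. This is the tightest of the four, and bounding it below one requires $\vartheta+\vartheta^2<3$.
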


The cardinality is minimal: finite systems with at most three distinct points
are known to be independent \cite{HRT,heil2006linear}.
Linnell's theorem covers configurations contained in translates of discrete
subgroups \cite{Linnell}; Demeter proved the relevant $(1,3)$ and
two-parallel-line cases for Schwartz windows \cite{demeter2010linear}; and
Demeter and Zaharescu proved the $(2,2)$ case for arbitrary nonzero $L^2$
windows \cite{demeter2012proof}.  Liu proved the $L^2$ assertion for almost
every $(1,3)$ configuration \cite{Liu2019}.

For the mixed arithmetic problem of three lattice points and one rogue point,
Okoudjou and Oussa exclude the maximally irrational unit-covolume case for
Schwartz (more generally, $W_0(\R)$) windows
\cite{OkoudjouOussa2025}; Oussa proves independence in the maximally
irrational supercritical regime of symplectic covolume greater than one
\cite{Oussa2026}.  Our background covolume is $1/2$, while all four triangle
determinants are below one.  Its geometry avoids the lattice, $(1,3)$, and
$(2,2)$ regimes covered by the preceding results.

\subsection*{The marked one-rogue-point dashboard}

Every noncollinear four-point configuration admits a marked presentation
\begin{equation}\label{eq:marked-one-rogue}
 \{0,u,v,q\},\qquad
 L=\Z u+\Z v,\qquad q=a u+b v,\qquad [u,v]\ne0.
\end{equation}
The marking consists of the reference triple $(0,u,v)$, the lattice it
generates, and the remaining, or \emph{rogue}, point $q$.  Its first label is
geometric,
\[
 \delta=\operatorname{covol}(L)=|[u,v]|,
\]
with supercritical, critical, and subcritical regimes according as
$\delta>1$, $\delta=1$, and $\delta<1$.  Its second label is arithmetic,
\[
 \varrho=\dim_{\Q}\operatorname{span}_{\Q}\{1,a,b\}\in\{1,2,3\},
\]
corresponding to rational, mixed, and maximally irrational rogue coordinates.
The covolume label depends on the chosen reference triangle; hence this is a
taxonomy of marked presentations, not a disjoint classification of unmarked
sets.  The rational rank, however, is unchanged by every admissible
re-marking.  For example, if $b\ne0$, then relative to the basis $(u,q)$ the
old point $v$ has coordinates $(-a/b,1/b)$, and multiplication by $b$ is a
$\Q$-linear isomorphism carrying
$\operatorname{span}_{\Q}\{1,-a/b,1/b\}$ onto
$\operatorname{span}_{\Q}\{b,-a,1\}$.  The other two re-markings are the
same calculation with $a$ and with $1-a-b$, respectively.

The current four-point research dashboard is summarized in
\cref{tab:dashboard}; see also \cite{OussaBook2026}.  It is included to locate
the present result and is not used as an input to its proof.
\begin{table}[H]
\caption{Working dashboard for marked four-point one-rogue configurations.
``Partially open'' means that no theorem covers the entire marked cell.}
\label{tab:dashboard}
\small
\begin{tabularx}{\textwidth}{@{}>{\raggedright\arraybackslash}p{0.16\textwidth}
  >{\raggedright\arraybackslash}X
  >{\raggedright\arraybackslash}X
  >{\raggedright\arraybackslash}X@{}}
\toprule
Geometric regime & $\varrho=1$ (rational) &
$\varrho=2$ (mixed) & $\varrho=3$ (maximally irrational)\\
\midrule
$\delta>1$ (supercritical) &
Complete by rational refinement and Linnell's theorem \cite{Linnell}. &
Partially open; the one-dimensional orbit-closure endpoint remains. &
Complete for four points by the large-covolume theorem \cite{Oussa2026}.\\
\addlinespace
$\delta=1$ (critical) &
Complete by rational refinement and Linnell's theorem. &
Complete in the critical four-point theory; the mechanism is winding and
return holonomy. &
Complete in the critical four-point theory; the mechanism uses phase-current
rigidity, locked zeros, adjoint reflection, and small divisors.\\
\addlinespace
$\delta<1$ (subcritical) &
Complete by rational refinement and Linnell's theorem. &
Partially open; no full-cell theorem is presently available. &
The positive HRT assertion is false in this cell by \cref{thm:main}; a full
classification of dependent configurations remains open.\\
\bottomrule
\end{tabularx}
\end{table}

Thus the theorem corroborates the search principle that genuinely new
four-point counterexamples should be sought in the irrational subcritical
cells.  We record this as an organizing heuristic, not as a theorem asserting
that every counterexample must arise by one fixed marking.

The phrase \emph{computer-assisted proof} is used in the validated-numerics
sense \cite{johansson2017arb,moore2009,rump2010}.  No global conclusion is
drawn from floating-point sampling.  Every coefficient and grid-center value
is an outward-rounded ball, and a derivative estimate covers each closed
cell.  The complete programs and a Lean algebraic companion are available in
the electronic supplement; see \cref{sec:formal-scope}.

\section{The configuration and its geometric position}
\label{sec:geometry}

Write $[u,v]=u_1v_2-u_2v_1$ for the standard symplectic form.  The absolute
symplectic determinant of a triangle $(p,q,r)$ is
$\abs{[q-p,r-p]}$, twice its Euclidean area.  Let
\begin{equation}\label{eq:four-points}
 p_0=(0,0),\quad
 p_1=(\alpha,\beta/2),\quad
 p_2=(1+\alpha,\beta/2),\quad
 p_3=\bigl(\alpha,(1+\beta)/2\bigr).
\end{equation}

\begin{proposition}[Exact subcritical geometry]\label{prop:geometry}
The four absolute symplectic triangle determinants of
\eqref{eq:four-points} are
\begin{equation}\label{eq:four-determinants}
 \frac{\beta}{2},\qquad \frac{\alpha}{2},\qquad
 \frac{1+\alpha+\beta}{2},\qquad \frac12.
\end{equation}
Each lies strictly between $0$ and $1$.  Consequently the points are
distinct, no three are collinear, and every triangle determined by them has
Euclidean area strictly less than $1/2$.
\end{proposition}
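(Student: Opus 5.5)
Compute the four determinants directly, then bound them using the numerical range of $\vartheta$.

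\emph{The four determinants.} Write $p_1=(\alpha,\beta/2)$, $p_2=p_1+(1,0)$ and $p_3=p_1+(0,1/2)$. Each triangle omits exactly one point.
\begin{itemize}
\item Triangle $(p_1,p_2,p_3)$: the edge vectors from $p_1$ are $(1,0)$ and $(0,1/2)$. The determinant is $1\cdot\tfrac12-0=\tfrac12$.
\item Triangle $(p_0,p_1,p_2)$: we get $[p_1,p_2]=\alpha\cdot\beta/2-(1+\alpha)\beta/2=-\beta/2$, so the absolute value is $\beta/2$.
\item Triangle $(p_0,p_1,p_3)$: we get $[p_1,p_3]=\alpha(1+\beta)/2-\alpha\beta/2=\alpha/2$.
\item Triangle $(p_0,p_2,p_3)$: we get
\[
[p_2,p_3]=(1+\alpha)(1+\beta)/2-\alpha\beta/2=(1+\alpha+\beta)/2 .
\]
\end{itemize}
These are the four quantities in \eqref{eq:four-determinants}.

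\emph{Bounds.} Since $1<2<8$, we have $1<\vartheta<2$. In fact $1.25^3=1.953\ldots<2<2.197\ldots=1.3^3$, so $1.25<\vartheta<1.3$. This gives $0.25<\alpha<0.3$ and $\beta=\vartheta^2-1\in(0.5625,0.69)$. Hence $\alpha/2$ and $\beta/2$ lie in $(0,1)$. Also $1+\alpha+\beta=\vartheta+\vartheta^2-1<1.3+1.69-1=1.99<2$, and this quantity is positive. So $(1+\alpha+\beta)/2\in(0,1)$. Trivially $1/2\in(0,1)$.

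\emph{Consequences.} A nonzero determinant for every triple means no three points are collinear, and in particular no two coincide. If two points coincided, any triangle containing both would be degenerate with determinant $0$. Euclidean area equals half the absolute determinant, so every area is less than $1/2$.

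The only step requiring any care is the upper bound for $(1+\alpha+\beta)/2$, since it is the largest determinant. It needs a concrete enclosure for $\vartheta$; the elementary cube comparison above suffices.
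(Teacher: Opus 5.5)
Your proof is correct and follows the paper's argument: expand the four determinants directly, then bound them with a rational enclosure of $\vartheta$ obtained by cubing. The only difference is cosmetic. Your tighter enclosure $1.25<\vartheta<1.3$ lets you bound $\vartheta^2$ just by squaring, whereas the paper uses $\vartheta<4/3$ together with a separately cubed bound $\vartheta^2<5/3$; squaring $4/3$ alone would give $19/9$ and so would not yield $\vartheta+\vartheta^2-1<2$.
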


\begin{proof}
For the ordered triples
\[
 (p_0,p_1,p_2),\quad (p_0,p_1,p_3),\quad
 (p_0,p_2,p_3),\quad (p_1,p_2,p_3),
\]
direct expansion gives
\[
 -\frac\beta2,\qquad \frac\alpha2,\qquad
 \frac{1+\alpha+\beta}{2},\qquad \frac12.
\]
Since $1<\vartheta<4/3$ and $1<\vartheta^2<5/3$, we have
$0<\alpha<1$, $0<\beta<1$, and
$1+\alpha+\beta=\vartheta+\vartheta^2-1<2$.  The rational bounds follow
by cubing their positive endpoints.
\end{proof}

\begin{proposition}[Rational rank]\label{prop:rational-rank}
The numbers $1,\alpha,\beta$ are linearly independent over $\Q$.  The
configuration \eqref{eq:four-points} is not contained in a translate of a
discrete subgroup of $\R^2$.
\end{proposition}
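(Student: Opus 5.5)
The plan has two parts: first prove the $\Q$-independence of $1,\alpha,\beta$, then deduce the geometric statement from it.

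\textbf{Independence.} Since $\alpha=\vartheta-1$ and $\beta=\vartheta^2-1$, the $\Q$-span of $\{1,\alpha,\beta\}$ equals the $\Q$-span of $\{1,\vartheta,\vartheta^2\}$. The change of basis is unitriangular with rational entries, so it suffices to show that $1,\vartheta,\vartheta^2$ are independent over $\Q$. This holds because $\vartheta=\sqrt[3]{2}$ has minimal polynomial $x^3-2$, which is irreducible over $\Q$ by Eisenstein at $2$. Hence $[\Q(\vartheta):\Q]=3$, and a nontrivial rational relation $c_0+c_1\vartheta+c_2\vartheta^2=0$ would produce a nonzero rational polynomial of degree at most $2$ vanishing at $\vartheta$, which is impossible.

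\textbf{Non-containment.} Suppose the four points lie in $w+\Gamma$ for some discrete subgroup $\Gamma\subset\R^2$. Since $p_0=0$ belongs to the set, we may take $w=0$, so $\Gamma$ contains the differences
\[
p_1=(\alpha,\beta/2),\qquad p_2-p_1=(1,0),\qquad p_3-p_1=(0,1/2).
\]
A discrete subgroup of $\R^2$ has the form $\Z e_1$ or $\Z e_1+\Z e_2$, with $e_1,e_2$ linearly independent over $\R$. The vectors $(1,0)$ and $(0,1/2)$ are $\R$-independent, so $\Gamma$ must be a full-rank lattice $\Z e_1+\Z e_2$. Both $(1,0),(0,1/2)\in\Gamma$, so the sublattice $\Lambda=\Z(1,0)+\Z(0,1/2)$ has finite index $N$ in $\Gamma$. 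Then $N\Gamma\subset\Lambda$, and in particular $Np_1\in\Lambda$. This means $N\alpha\in\Z$ and $N\beta/2\in\tfrac12\Z$, so $\alpha$ and $\beta$ are rational, which contradicts the independence of $1,\alpha,\beta$ proved above.

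\textbf{Obstacle.} The only step needing care is the finite-index argument, namely that $N\Gamma\subset\Lambda$. One can justify it directly. Both $\Gamma$ and $\Lambda$ have rank $2$, so $\Gamma/\Lambda$ is a finitely generated abelian group of rank $0$, hence finite. Equivalently, writing $(1,0)$ and $(0,1/2)$ in terms of $e_1,e_2$ gives an integer matrix with nonzero determinant, and inverting it via its adjugate shows that $\det\cdot\,\Gamma\subset\Lambda$.
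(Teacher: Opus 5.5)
Your proposal is correct and follows essentially the same route as the paper. Independence comes from the irreducibility of $X^3-2$. Non-containment comes from the observation that a discrete group containing $(1,0)$, $(0,1/2)$ and $p_1=\alpha(1,0)+\beta(0,1/2)$ contains the lattice they generate with finite index, which forces $\alpha,\beta\in\Q$. Your explicit justification of $N\Gamma\subset\Lambda$ simply fills in the finite-index step that the paper states in one line.
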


\begin{proof}
A rational relation among $1,\alpha,\beta$ would give a rational polynomial
of degree at most two vanishing at $\vartheta$, contrary to the
irreducibility of $X^3-2$.

Translate a putative discrete subgroup so that it contains all point
differences.  It would contain $(1,0)$, $(0,1/2)$, and
$p_1=\alpha(1,0)+\beta(0,1/2)$.  A discrete subgroup containing the lattice
generated by the first two vectors contains that lattice with finite index;
hence every element has rational coordinates in this basis.  This would make
$\alpha,\beta$ rational, a contradiction.
\end{proof}

For the three possible pairings, the determinants of the paired direction
vectors are
\[
 \frac{\alpha+\beta}{2},\qquad
 \frac{1+\alpha}{2},\qquad
 -\frac{1+\beta}{2},
\]
so no opposite pairing gives parallel lines.  The configuration is therefore
not of type $(2,2)$; it is not of type $(1,3)$ because no three points are
collinear.

In this paper, \emph{subcritical} means that the symplectic covolume of the
background lattice is below one; this convention is stated because density
terminology can reverse the word.  After translating by $-p_1$, the
background points are $0,u,v$, where $u=(1,0)$ and $v=(0,1/2)$, and the
rogue point is
\[
 p_0-p_1=-\alpha u-\beta v.
\]
Thus the marked presentation has $\delta=1/2$ and $\varrho=3$.  The four
possible nondegenerate marking covolumes are precisely the four absolute
determinants in \eqref{eq:four-determinants}; all are below one.  We therefore
use the stronger phrase \emph{intrinsically subcritical}: no admissible
re-marking moves the configuration out of the subcritical row of
\cref{tab:dashboard}.

\section{Exact vector-Zak reduction}
\label{sec:zak}

For $f\in L^2(\R)$, use the scalar Zak transform
\begin{equation}\label{eq:scalar-zak}
 (\cZ f)(x,\omega)=\sum_{k\in\Z}f(x-k)\e^{2\pi\ii k\omega}
\end{equation}
in the usual $L^2$ sense, and define
\begin{equation}\label{eq:vector-zak}
 (\cZtwo f)_{r+1}(x,\omega)
 =2^{-1/2}(\cZ f)\left(x,\frac{\omega+r}{2}\right),
 \qquad r\in\{0,1\}.
\end{equation}
As in \cite[\S3]{FPPV2026}, this is a unitary map from $L^2(\R)$ onto the
space of measurable $F:\R^2\to\C^2$ satisfying
\begin{align}
 F(x+1,\omega)&=U_1(\omega)F(x,\omega),
 &U_1(\omega)&=\e^{\pi\ii\omega}
                   \begin{pmatrix}1&0\\0&-1\end{pmatrix},
 \label{eq:sewing-x}\\
 F(x,\omega+1)&=U_2F(x,\omega),
 &U_2&=\begin{pmatrix}0&1\\1&0\end{pmatrix}.
 \label{eq:sewing-w}
\end{align}
See also \cite{ZibZee97}.  We call such an $F$ a vector-Zak section.

Put
\[
 D=\begin{pmatrix}1&0\\0&-1\end{pmatrix},\qquad
 S=\begin{pmatrix}0&1\\1&0\end{pmatrix}.
\]
For $m,n\in\Z$, direct use of \eqref{eq:scalar-zak} gives
\begin{equation}\label{eq:half-lattice-action}
 \cZtwo\rho(m,n/2)\cZtwo^{-1}
 =L_{m,n}(x,\omega),\quad
 L_{m,n}=\e^{\pi\ii(nx-m\omega+mn/2)}L_{m,n}^{(0)},
\end{equation}
where
\[
 L_{m,n}^{(0)}=
 \begin{cases}
  \operatorname{diag}(1,\e^{-\pi\ii m}),&n\equiv0\pmod2,\\[1mm]
  \begin{pmatrix}0&1\\ \e^{-\pi\ii m}&0\end{pmatrix},
      &n\equiv1\pmod2.
 \end{cases}
\]
In particular, $L_{1,0}=\e^{-\pi\ii\omega}D$ and
$L_{0,1}=\e^{\pi\ii x}S$.

Let
\begin{equation}\label{eq:base-map}
 \tau=(\alpha,\beta),\qquad Tz=z-\tau,\qquad z=(x,\omega).
\end{equation}
The irrational Weyl shift acts by
\begin{equation}\label{eq:irrational-action}
 (\cZtwo\rho(\zeta)\cZtwo^{-1}F)(x,\omega)
 =\eta(x)F(T(x,\omega)),\qquad
 \eta(x)=\e^{\pi\ii\beta(x-\alpha/2)}.
\end{equation}
Consequently,
\begin{equation}\label{eq:zak-cocycle-action}
 \cZtwo\left[I+\frac35\rho(1,0)+\frac35\rho(0,1/2)\right]
 \rho(\zeta)\cZtwo^{-1}F(z)=B(z)F(Tz),
\end{equation}
where
\begin{align}
 A(x,\omega)&=I+\frac35\e^{-\pi\ii\omega}D
                  +\frac35\e^{\pi\ii x}S,\label{eq:A-definition}\\
 B(x,\omega)&=\eta(x)A(x,\omega).\label{eq:B-definition}
\end{align}
The scalar $\eta$ may be omitted in projective estimates, but not in exact
sewing or multiplier equations.  Direct substitution gives the fiber
covariance
\begin{equation}\label{eq:B-covariance}
 B(z+e_j)=U_j(z)B(z)U_j(Tz)^{-1},\qquad j=1,2,
\end{equation}
where $U_1(z)=U_1(\omega)$ and $U_2(z)=U_2$.

\begin{lemma}[Uniform conditioning]\label{lem:conditioning}
For every $(x,\omega)\in\R^2$,
\begin{equation}\label{eq:conditioning}
 \abs{\det A(x,\omega)}\ge\frac7{25},\qquad
 \norm{A(x,\omega)}_{\mathrm{op}}\le\frac{11}{5},\qquad
 \sigma_{\min}(A(x,\omega))\ge\frac7{55}.
\end{equation}
\end{lemma}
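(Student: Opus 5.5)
Write $A=I+aD+bS$ with $a=\frac35\e^{-\pi\ii\omega}$ and $b=\frac35\e^{\pi\ii x}$. Since $D$ and $S$ are the Pauli matrices $\sigma_z$ and $\sigma_x$, we have explicitly
\[
 A=\begin{pmatrix}1+a&b\\ b&1-a\end{pmatrix},\qquad
 \det A=1-a^2-b^2 .
\]
The plan is to bound the determinant first, then the operator norm, and to obtain $\sigma_{\min}$ from the identity $\sigma_{\min}\sigma_{\max}=\abs{\det A}$.

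\emph{Determinant.} Here $a^2=\frac9{25}\e^{-2\pi\ii\omega}$ and $b^2=\frac9{25}\e^{2\pi\ii x}$. Hence
\[
 a^2+b^2=\frac9{25}\bigl(\e^{-2\pi\ii\omega}+\e^{2\pi\ii x}\bigr),
\]
and the bracket equals $2\cos(\pi(x+\omega))\,\e^{\pi\ii(x-\omega)}$, so it has modulus at most $2$. The naive triangle inequality $\abs{\det A}\ge 1-\frac{18}{25}=\frac7{25}$ therefore already gives the stated bound. This is exactly the claimed constant, so no finer argument is needed.

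\emph{Operator norm.} By the triangle inequality, $\norm{A}_{\mathrm{op}}\le \norm{I}+\frac35\norm{D}+\frac35\norm{S}=1+\frac65=\frac{11}5$, because $D$ and $S$ are unitary.

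\emph{Smallest singular value.} For a $2\times2$ matrix the singular values satisfy $\sigma_{\min}\sigma_{\max}=\abs{\det A}$ and $\sigma_{\max}=\norm{A}_{\mathrm{op}}$. Therefore
\[
 \sigma_{\min}(A)\ge\frac{7/25}{11/5}=\frac7{55}.
\]
All three bounds are uniform in $(x,\omega)$.

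No step is a genuine obstacle. The only point requiring care is computing $\det A$ correctly. This uses $D^2=S^2=I$ together with the anticommutation $DS=-SD$; the latter means that, in $\det(I+aD+bS)$, the cross terms cancel and only $1-a^2-b^2$ survives. This is confirmed by the explicit matrix above.
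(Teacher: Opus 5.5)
Your proof is correct and follows the paper's argument exactly. Both use the explicit formula $\det A=1-\frac9{25}\e^{-2\pi\ii\omega}-\frac9{25}\e^{2\pi\ii x}$ with the reverse triangle inequality, the triangle inequality for the operator norm, and the two-dimensional identity $\sigma_{\min}=\abs{\det A}/\sigma_{\max}$.
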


\begin{proof}
One has
\[
 \det A=1-\frac9{25}\e^{-2\pi\ii\omega}
          -\frac9{25}\e^{2\pi\ii x}.
\]
The reverse triangle inequality gives the first estimate, and the triangle
inequality gives the second.  Since
$\sigma_{\min}(A)=|\det A|/\sigma_{\max}(A)$ in dimension two, the third
follows.
\end{proof}

\section{Validated finite-dimensional inequalities}
\label{sec:certificates}

For $N\ge1$, define
\begin{equation}\label{eq:PN}
 P_N(z)=A(z)A(Tz)\cdots A(T^{N-1}z),\qquad
 F_N(z)=\norm{P_N(z)}_{\Frob}^2.
\end{equation}
Let $\sigma_1(M)\ge\sigma_2(M)>0$ be the singular values of an invertible
two-by-two matrix.  Write $\ell_j(M)$ and $r_j(M)$ for its left and right
singular lines.  If $L$ is a complex line and $v\in\C^2$, we use the
phase-independent notation
\[
 \abs{\ip{v}{L}}:=\norm{P_L v},
\]
where $P_L$ is the orthogonal projection onto $L$.  For two lines we similarly
write
\[
 \abs{\ip{L}{L'}}:=\norm{P_L P_{L'}}_{\mathrm{op}},
\]
which is the absolute inner product of any unit representatives.
Throughout, $\ip{u}{v}=u^*v$ is conjugate-linear in the first argument and
linear in the second.

\begin{lemma}[Scalar singular reductions]\label{lem:scalar-reductions}
Let $M\in\mathrm{GL}_2(\C)$, $F=\norm M_{\Frob}^2$, and
$d=\abs{\det M}$.  If $0<\varepsilon<1$, then
\begin{equation}\label{eq:gap-polynomial}
 \varepsilon^2F^2-(1+\varepsilon^2)^2d^2>0
 \quad\Longrightarrow\quad
 \frac{\sigma_2(M)}{\sigma_1(M)}<\varepsilon.
\end{equation}
If $P,Q\in\mathrm{GL}_2(\C)$ have singular ratios $r,s$, if
$0\le\gamma\le1$, and
$X=\abs{\ip{r_1(P)}{\ell_1(Q)}}^2$, then
\begin{equation}\label{eq:junction-identity}
 \frac{\norm{PQ}_{\Frob}^2}
 {\norm P_{\Frob}^2\norm Q_{\Frob}^2}
 =\frac{X(1+r^2s^2)+(1-X)(r^2+s^2)}
 {(1+r^2)(1+s^2)}.
\end{equation}
Thus, if $r,s\le\varepsilon$ and the left side of
\eqref{eq:junction-identity} exceeds
$\gamma^2+2\varepsilon^2+\varepsilon^4$, then $X>\gamma^2$.
\end{lemma}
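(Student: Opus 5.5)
Both parts reduce to elementary two-by-two linear algebra in singular-value coordinates. Write $\sigma_1\ge\sigma_2>0$, so that $F=\sigma_1^2+\sigma_2^2$ and $d=\sigma_1\sigma_2$, and set $t=\sigma_2/\sigma_1\in(0,1]$. Then
\[
 \frac{d^2}{F^2}=\frac{t^2}{(1+t^2)^2}.
\]
The hypothesis in \eqref{eq:gap-polynomial} says
\[
 \frac{d^2}{F^2}<\frac{\varepsilon^2}{(1+\varepsilon^2)^2}.
\]
The function $h(t)=t/(1+t^2)$ is strictly increasing on $(0,1]$, since $h'(t)=(1-t^2)/(1+t^2)^2\ge0$. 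Both $t$ and $\varepsilon$ lie in $(0,1]$, and $h(t)<h(\varepsilon)$, so $t<\varepsilon$. The only care needed is to take square roots of positive quantities and to use $\varepsilon<1$ so that monotonicity applies.

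For the junction identity, take singular value decompositions $P=U_P\Sigma_P V_P^*$ and $Q=U_Q\Sigma_Q V_Q^*$. Unitary invariance of the Frobenius norm gives
\[
 \norm{PQ}_{\Frobenius}^2=\norm{\Sigma_P W\Sigma_Q}_{\Frob}^2,\qquad W=V_P^*U_Q .
\]
Here $W$ is a $2\times2$ unitary. Its entries satisfy $\abs{W_{11}}^2=\abs{W_{22}}^2=X$ and $\abs{W_{12}}^2=\abs{W_{21}}^2=1-X$, where $X=\abs{\ip{r_1(P)}{\ell_1(Q)}}^2$. This is phase-independent, consistent with the line notation. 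Expanding entrywise gives
\[
 \sum_{i,j}\sigma_i(P)^2\abs{W_{ij}}^2\sigma_j(Q)^2 .
\]
Normalize by $\sigma_1(P)^2\sigma_1(Q)^2$, and use $\norm P_{\Frob}^2=\sigma_1(P)^2(1+r^2)$ and the analogous expression for $Q$. This yields \eqref{eq:junction-identity} directly.

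For the consequence, bound the right side of \eqref{eq:junction-identity} from above. Since $(1+r^2)(1+s^2)\ge1$, the right side is at most $X(1+r^2s^2)+(1-X)(r^2+s^2)$. Using $X\le1$, $1-X\le1$ and $r,s\le\varepsilon$, this is at most $X+\varepsilon^4+2\varepsilon^2$. So if the left side exceeds $\gamma^2+2\varepsilon^2+\varepsilon^4$, then $X>\gamma^2$.

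There is no genuine obstacle. The step requiring the most attention is the bookkeeping that identifies $\abs{W_{11}}^2$ with $X$ under the stated conventions. Here $r_1(P)$ is the top right singular vector, i.e. the first column of $V_P$, and $\ell_1(Q)$ is the first column of $U_Q$. One also needs unitarity of $W$ to get the doubly stochastic pattern of moduli.
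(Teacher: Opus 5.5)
Your proof is correct and follows the paper's route. The paper divides the gap polynomial by $\sigma_1(M)^4$ to get $(\varepsilon^2-r^2)(1-\varepsilon^2r^2)$, which carries the same content as your monotonicity of $t/(1+t^2)$ on $(0,1]$; it gets the junction identity by the same singular-value expansion, and it argues the final implication by contraposition from $X\le\gamma^2$ (using $\gamma\le1$), whereas your direct bound by $X+2\varepsilon^2+\varepsilon^4$ reaches the same conclusion without needing $\gamma\le1$.
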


\begin{proof}
With $r=\sigma_2(M)/\sigma_1(M)$, division of the expression in
\eqref{eq:gap-polynomial} by $\sigma_1(M)^4$ gives
\[
 (\varepsilon^2-r^2)(1-\varepsilon^2r^2),
\]
whose second factor is positive.  For \eqref{eq:junction-identity}, insert
singular-value decompositions and expand the four squared entries.  If
$X\le\gamma^2$, its numerator is at most
$\gamma^2(1+\varepsilon^4)+2\varepsilon^2$, while its denominator is at
least one.  The asserted implication follows.
\end{proof}

Let $R=T^{-1}$ and define the finite stable center
\begin{equation}\label{eq:finite-center-first}
 C(z)=r_2(P_{16}(R^{16}z))=r_2(P_{16}(z+16\tau)).
\end{equation}
The following is the sole computer-assisted input to the analytic proof.

\begin{theorem}[Validated certificate]\label{thm:validated-certificate}
The following statements hold uniformly on $\T^2$.
\begin{enumerate}[label=\textup{(C\arabic*)},leftmargin=2.4em]
\item\label{cert:gap}
For $\varepsilon=10^{-3}$,
\[
 \varepsilon^2F_{16}(z)^2-(1+\varepsilon^2)^2
       \abs{\det P_{16}(z)}^2>3.
\]
\item\label{cert:junction}
For $\gamma=1/4$ and
$c_0=\gamma^2+2\varepsilon^2+\varepsilon^4$,
\[
 F_{32}(z)-c_0F_{16}(z)F_{16}(T^{16}z)>15{,}000{,}000.
\]
\item\label{cert:smooth-overlap}
Let $s_0$ be the smooth step
\[
 s_0(x)=
 \begin{cases}
 0,&x\le0,\\
 \displaystyle\frac{\e^{-1/x}}
 {\e^{-1/x}+\e^{-1/(1-x)}},&0<x<1,\\
 1,&x\ge1,
 \end{cases}
\]
and, as in \cite[\S3.3]{FPPV2026}, let $\chi$ be the smooth vector-Zak
section whose restriction to
$[0,1]\times\R$ is
\begin{equation}\label{eq:smooth-chi}
 \chi(x,\omega)=\frac1{\sqrt2}
 \begin{pmatrix}
  \sin(\pi s_0(x)/2)+\cos(\pi s_0(x)/2)\e^{-\pi\ii\omega}\\
  \sin(\pi s_0(x)/2)-\cos(\pi s_0(x)/2)\e^{-\pi\ii\omega}
 \end{pmatrix}.
\end{equation}
Then
\[
 \abs{\ip{\chi(z)}{C(z)}}>\frac12.
\]
\item\label{cert:linear-gauge}
Let $\chi_{\mathrm{lin}}$ be obtained from \eqref{eq:smooth-chi} by replacing
$s_0(x)$ with $x$ on $[0,1]$ and extending by the sewing laws.  Let $P_C(z)$
be the orthogonal projection onto $C(z)$ and set
\begin{align}
 c_{\mathrm{ref}}(z)&=
 \frac{P_C(z)\chi_{\mathrm{lin}}(z)}
 {\ip{\chi_{\mathrm{lin}}(z)}{P_C(z)\chi_{\mathrm{lin}}(z)}},
 \label{eq:cref}\\
 q_{\mathrm{ref}}(z)&=
 \ip{\chi_{\mathrm{lin}}(z)}
 {B(z)c_{\mathrm{ref}}(Tz)}.
 \label{eq:qref}
\end{align}
Then
\begin{equation}\label{eq:linear-cert}
 \abs{\ip{\chi_{\mathrm{lin}}(z)}{C(z)}}>\frac12,\qquad
 \operatorname{Re}\!\left(\e^{-3\pi\ii/7}q_{\mathrm{ref}}(z)\right)
 >\frac3{20}.
\end{equation}
In \eqref{eq:qref}, the projector at $Tz$ is built from
$P_{16}(R^{16}Tz)=P_{16}(z+15\tau)$.
\end{enumerate}
\end{theorem}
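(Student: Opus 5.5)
This is a validated-numerics statement, so I would prove it by a branch-and-bound cover of the fundamental domain. The plan is to reduce each of \ref{cert:gap}--\ref{cert:linear-gauge} to the positivity of a finite list of explicit, real-analytic (or, for \ref{cert:smooth-overlap}, smooth) functions on a compact fundamental domain. Each is then checked with interval arithmetic on a grid of closed cells, with a derivative bound covering each cell.

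\emph{Reduction to a compact domain.} The map $z\mapsto A(z)$ is not $\Z^2$-periodic. However, by \eqref{eq:B-covariance} with the unimodular factor $\eta$ dropped, $P_N(z+e_j)=U_j(z)P_N(z)U_j(T^Nz)^{-1}$ with $U_j$ unitary. So $F_N$ and $\abs{\det P_N}$ are $\Z^2$-periodic, and the singular lines transform by unitaries. Likewise, $\chi$, $\chi_{\mathrm{lin}}$ and $C$ all obey the same sewing laws. Hence the quantities $\abs{\ip{\chi}{C}}$, $\abs{\ip{\chi_{\mathrm{lin}}}{C}}$ and $q_{\mathrm{ref}}$ are periodic. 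For $q_{\mathrm{ref}}$ this needs a short check: the normalization in \eqref{eq:cref} makes $c_{\mathrm{ref}}$ sew like $\chi_{\mathrm{lin}}$, and then \eqref{eq:B-covariance} cancels the sewing unitaries, this time including $\eta$. It therefore suffices to verify each inequality on $[0,1]^2$. The exception is \ref{cert:smooth-overlap}, where $s_0$ is only smooth. There I would split off thin strips near $x=0,1$, where $s_0$ is exponentially flat and $\chi$ is within a tiny explicit distance of its boundary value, and treat the interior with bounds on $s_0'$.

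\emph{Cellwise verification.} For \ref{cert:gap} and \ref{cert:junction} the functions are explicit trigonometric polynomials in $(x,\omega)$, since $\tau$ enters only through phases $\e^{\pi\ii k\alpha}$, $\e^{\pi\ii k\beta}$. I would enclose $\vartheta$ in a rational ball, evaluate $P_{16}$ and $P_{32}$ at cell centers by ball arithmetic, and bound the gradient. The gradient bound comes from the product rule together with $\norm{A}_{\mathrm{op}}\le 11/5$ and $\norm{\partial A}\le 3\pi/5$ (\cref{lem:conditioning}), which gives $\norm{\partial P_N}\le N(3\pi/5)(11/5)^{N-1}$. The large margins ($3$ and $1.5\times10^7$) are chosen so that this crude Lipschitz bound closes on a moderate grid. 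By \cref{lem:scalar-reductions}, \ref{cert:gap} yields a singular gap below $\varepsilon$ for $P_{16}$, uniformly in $z$. This makes $C(z)$ a real-analytic line field on each cell. It can be represented by the unit vector $r_2$ computed from the explicit $2\times2$ Hermitian matrix $P_{16}^*P_{16}$. Its derivative is bounded by $\norm{\partial(P^*P)}/(\sigma_1^2-\sigma_2^2)$, and the denominator is controlled by the certified gap. With that derivative bound, \ref{cert:smooth-overlap} and \ref{cert:linear-gauge} become cellwise checks of explicit expressions in $P_C$, which is a rational function of the entries of $P^*P$. The denominator in \eqref{eq:cref} equals $\abs{\ip{\chi_{\mathrm{lin}}}{C}}^2$, which is bounded below by $1/4$ by the first half of \eqref{eq:linear-cert}. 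This lets the quotient be propagated with controlled loss.

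\emph{Main obstacle.} I expect the hardest step to be \ref{cert:linear-gauge}, the half-plane bound on $q_{\mathrm{ref}}$. It couples $C$ at $z$ and at $Tz$, so it involves $P_{16}$ at two shifted points and the full phase of $B$, with a margin of only $3/20$. The derivative bounds for eigenvector fields of $P_{16}^*P_{16}$ grow like $(11/5)^{32}$ relative to the gap, and naive Lipschitz covering may then need an infeasible grid. My first attempt would be adaptive subdivision, refining only cells where the centered enclosure fails. If that is too costly, I would switch to first-order Taylor models, meaning a center value plus an interval Jacobian, for the normalized matrix $P_{16}/\norm{P_{16}}_{\Frob}$. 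This avoids the exponential growth, since normalization removes the $\sigma_1$ scale.
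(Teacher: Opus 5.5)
Your architecture (reduction to the fundamental square via the sewing covariance, a finite cover with a per-cell derivative allowance, and using \ref{cert:gap} to control the spectral gap in the later items) is the paper's. However, the step that is supposed to make the cover close would fail. The product-rule bound $\norm{\partial P_N}\le N(3\pi/5)(11/5)^{N-1}$ is far too crude at these depths. For $N=32$ it gives $\norm{\partial P_{32}}\lesssim2.5\times10^{12}$. Combined with $\norm{P_{32}}\le(11/5)^{32}\approx9\times10^{10}$, this yields a Lipschitz constant of order $10^{24}$ for $F_{32}$, against a margin of order $10^{7}$, which means a mesh near $10^{-17}$. The gap polynomial in \ref{cert:gap} fails the same way: the $\varepsilon^2F_{16}^2$ term alone gets a crude Lipschitz bound of order $10^{18}$ against a margin of about $3$.

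The margins $3$ and $1.5\times10^7$ were not ``chosen'' to absorb such a bound; they are simply what the computation yields. The true sizes are much smaller because of cancellation in the products: $F_{16}<30000$ and $\norm{\partial_xH}<495000$, versus $(11/5)^{32}$. The paper captures this by expanding every entry of $P_N$ exactly as a Laurent polynomial in $\e^{\pi\ii x}$ and $\e^{-\pi\ii\omega}$ with outward-rounded ball coefficients. Each tested scalar is then an explicit trigonometric polynomial whose partial derivatives are bounded by the weighted coefficient sums $L_x=\pi\sum|l|\abs{c_{k,l}}$ and $L_\omega=\pi\sum|k|\abs{c_{k,l}}$. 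This is what lets fixed $512^2$, $1280^2$ and $768^2$ grids suffice; for \ref{cert:gap}, center values exceed $14.86$ while the allowance is below $11.33$. Your fallback (interval Jacobians, adaptive subdivision) points in this direction, but you invoke it only for \ref{cert:linear-gauge}. Also, normalizing by $\norm{P_{16}}_{\Frob}$ does not by itself repair a derivative bound that is off by ten or more orders of magnitude.

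The same problem recurs in \ref{cert:smooth-overlap}--\ref{cert:linear-gauge}. There you propose to compute the eigenline $r_2$ of $H=P_{16}^*P_{16}$ and bound its derivative by $\norm{\partial H}/\Delta$. With your $\norm{\partial H}$ this inherits the blow-up. Moreover, $P_C$ is not a rational function of $H$, since $\Delta=\sqrt{F^2-4\det H}$. The paper never differentiates an eigenvector or a square root:
\begin{itemize}
\item For \ref{cert:smooth-overlap} it certifies the polynomial inequality $\ip{\chi}{H\chi}<\frac{749}{1000}F$, with $H,F$ taken at $z+16\tau$. It converts this to $b>1/4$ using the identity $\ip{\chi}{H\chi}/F=\frac12+\frac{1-r^2}{1+r^2}\bigl(\frac12-b\bigr)$ together with $\frac{1-r^2}{1+r^2}\ge\kappa$ from \ref{cert:gap}.
\item For \ref{cert:linear-gauge} it writes $P_C=\frac12I+(FI/2-H)/\Delta$, so $q_{\mathrm{ref}}=(n_0+\Delta n_1)/(r_0+\Delta/2)$ with $n_0,n_1,r_0$ polynomial and a positive denominator. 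The half-plane bound becomes $a+\Delta c>0$, which is affine in $\Delta$. Since $\kappa F\le\Delta\le F$, it suffices to check the two polynomial inequalities $a+\kappa Fc>0$ and $a+Fc>0$, whatever the sign of $c$.
\end{itemize}
Finally, $\chi_{\mathrm{lin}}(Tz)$ has a kink along $x=\alpha$, so the \ref{cert:linear-gauge} quantities are not real-analytic on $[0,1]^2$. The paper splits the square at $x=\alpha$ and $\omega=\beta$ so that on each rectangle every tested scalar is an exact finite exponential sum. Your thin strips near $x=0,1$ in \ref{cert:smooth-overlap} are unnecessary: the global bound $0\le s_0'\le2$ already gives $\norm{\partial\chi}\le\pi$.
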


\begin{proof}[Validated verification]
The finite-cover strategy adapts \cite[\S5 and ancillary code]{FPPV2026}.
Every entry of $P_N$ is expanded as a finite Laurent polynomial in
$X=\e^{\pi\ii x}$ and $W=\e^{-\pi\ii\omega}$.  Coefficients, phases, products,
and grid values are evaluated as outward-rounded Arb balls at 160-bit
precision.  For a real trigonometric polynomial
\[
 p(x,\omega)=\sum_{k,l}c_{k,l}\e^{\pi\ii(lx-k\omega)}
\]
put
\begin{equation}\label{eq:fourier-derivative-bounds}
 L_x=\pi\sum_{k,l}|l|\abs{c_{k,l}},\qquad
 L_\omega=\pi\sum_{k,l}|k|\abs{c_{k,l}}.
\end{equation}
On a square cell of side $1/M$ centered at $z_c$,
\begin{equation}\label{eq:cell-cover}
 p(z)\ge p(z_c)-\frac{L_x+L_\omega}{2M}.
\end{equation}
All terms on the right are balls with directed rounding.

For~\ref{cert:gap}--\ref{cert:junction},
\path{arb_fourier_domination_certificate.py} uses a $512^2$ cover.
The grid-center lower bound, derivative allowance, and global lower bound are
\[
\begin{array}{c@{\quad}c@{\quad}c@{\quad}c}
&\text{center}&\text{allowance}&\text{global}\\
\text{gap}&>14.8622&<11.3224&>3.5398\\
\text{junction}&>15{,}955{,}384.58&<555{,}808.77&
>15{,}399{,}575.8.
\end{array}
\]
Residual balls around symbolically cancelling odd modes are charged to both
the value and derivative budgets.

For~\ref{cert:smooth-overlap},
\path{arb_fourier_overlap_certificate.py} proves
\[
 \norm{P_{16}(z+16\tau)\chi(z)}^2
 <\frac{749}{1000}F_{16}(z+16\tau)
\]
on a $1280^2$ cover.  The center lower bound exceeds $591.0561608040093$, the
derivative allowance is $542.179296875$, and the global lower bound is
$48.87686392900937$.  The only non-Fourier input is $0\le s_0'\le2$.
Indeed, after $x=(1-u)/2$,
\[
 s_0'(x)=\frac{2(1+u^2)}{(1-u^2)^2}
 \operatorname{sech}^2\!\left(\frac{2u}{1-u^2}\right)\le2
\]
because $\cosh(y)^2\ge1+y^2$.  Hence
$\norm{\partial_x\chi},\norm{\partial_\omega\chi}\le\pi$.  The coefficient
bounds used in the cell allowance, for
$H=P_{16}(z+16\tau)^*P_{16}(z+16\tau)$ and $F=\operatorname{tr}H$, are
\[
\begin{gathered}
 \norm{\partial_x H}<495000,\qquad
 \norm{\partial_\omega H}<319000,\qquad F<30000,\\
 |\partial_x F|<169000,\qquad |\partial_\omega F|<94000.
\end{gathered}
\]
If $b$ is the squared overlap with the bottom right singular line and
$r=\sigma_2/\sigma_1$, spectral decomposition gives
\[
 \frac{\ip{\chi}{H\chi}}{F}
 =\frac12+\frac{1-r^2}{1+r^2}\left(\frac12-b\right).
\]
Since $(1-r^2)/(1+r^2)\ge\kappa$ from~\ref{cert:gap}, where
$\kappa=(1-10^{-6})/(1+10^{-6})$, the energy inequality
forces $b>1/4$, which is the stated overlap.

For~\ref{cert:linear-gauge}, the fundamental square is divided at
$x=\alpha$ and $\omega=\beta$ into four rectangles.  The exact sewing factors
of $\chi_{\mathrm{lin}}(Tz)$ are fixed in each rectangle.  If
$y=Tz$, $P=P_{16}(y+16\tau)=P_{16}(z+15\tau)$,
$H=P^*P$, $F=\operatorname{tr}H$, and
$\Delta=\sigma_1(P)^2-\sigma_2(P)^2$, then
\begin{equation}\label{eq:Delta-range}
 \kappa F\le\Delta\le F,\qquad
 \kappa=\frac{1-10^{-6}}{1+10^{-6}}.
\end{equation}
The bottom projector is
\[
 P_C=\frac12I+\frac{FI/2-H}{\Delta}.
\]
Put $K_0=FI/2-H$ and
\[
\begin{aligned}
 n_0&=\ip{\chi_{\mathrm{lin}}(z)}
 {B(z)K_0\chi_{\mathrm{lin}}(y)},&
 r_0&=\ip{\chi_{\mathrm{lin}}(y)}
 {K_0\chi_{\mathrm{lin}}(y)},\\
 n_1&=\frac12\ip{\chi_{\mathrm{lin}}(z)}
 {B(z)\chi_{\mathrm{lin}}(y)}.
\end{aligned}
\]
Then
\[
 q_{\mathrm{ref}}(z)=
 \frac{n_0+\Delta n_1}{r_0+\Delta/2}.
\]
The simultaneously certified overlap makes the denominator positive.  With
$\rho_0=\e^{-3\pi\ii/7}$, $\mu=3/20$,
\[
 a=\operatorname{Re}(\rho_0n_0)-\mu r_0,\qquad
 c=\operatorname{Re}(\rho_0n_1)-\mu/2,
\]
the half-plane inequality is equivalent to $a+\Delta c>0$.  It therefore
suffices to check $a+\kappa Fc>0$ and $a+Fc>0$, irrespective of the sign of
$c$.
\path{arb_linear_gauge_certificate.py} uses a $768^2$ cover of each
rectangle.  The least global lower bounds are
$74.94293402046668$ and $74.94319454060898$; the linear-overlap polynomial
\[
 \frac{18}{25}F-
 \ip{\chi_{\mathrm{lin}}(y)}{H\chi_{\mathrm{lin}}(y)}
\]
has global lower bound $792.9113220696389$.
To spell out the overlap deduction, put
$X=\ip{\chi_{\mathrm{lin}}(y)}{H\chi_{\mathrm{lin}}(y)}$ and
$b=\ip{\chi_{\mathrm{lin}}(y)}
{P_C(y)\chi_{\mathrm{lin}}(y)}$.  Since $\chi_{\mathrm{lin}}$ has unit norm,
\[
 b=\frac12+\frac{F/2-X}{\Delta}
 >\frac12-\frac{11}{50\kappa}>\frac14.
\]
Thus $|\ip{\chi_{\mathrm{lin}}(y)}{C(y)}|=\sqrt b>1/2$, as asserted.

On each sewing rectangle, every tested scalar has the exact form
\[
 p(x,\omega)=\sum_{q,r,e}c_{q,r,e}
 \exp\!\left(\pi\ii\bigl[(r/2+e\beta)x-(q/2)\omega\bigr]\right).
\]
Thus
\[
 L_x=\pi\sum_{q,r,e}|r/2+e\beta|\,|c_{q,r,e}|,\qquad
 L_\omega=\pi\sum_{q,r,e}\frac{|q|}{2}|c_{q,r,e}|.
\]
For a rectangle of widths $\Delta x,\Delta\omega$ and an $M^2$ center grid,
the outward-rounded cell allowance is
$L_x\Delta x/(2M)+L_\omega\Delta\omega/(2M)$.  The four chart formulas agree
on their common boundaries by the exact sewing laws.

An independently formulated $1024^2$ implementation, using the opposite
projector-sign convention but sharing the low-level Laurent-product and
Hermitian-polynomial routines, proves the weaker half-plane margin $1/10$.
Its rigorous global numerator and denominator-test lower bounds are
\[
 423.5107387261\qquad\text{and}\qquad2219.0330134840.
\]
This independently checks the source shift, phase, and projector orientation;
it is not an independent software-stack replication or an additional premise.

The theorem states smaller rational bounds than the printed ball endpoints,
so decimal interpretation enters no later deduction.
\end{proof}

By \cref{lem:scalar-reductions,thm:validated-certificate},
\begin{equation}\label{eq:domination-and-junction}
 \frac{\sigma_2(P_{16}(z))}{\sigma_1(P_{16}(z))}<\frac1{1000},\qquad
 \abs{\ip{r_1(P_{16}(z))}{\ell_1(P_{16}(T^{16}z))}}>\frac14.
\end{equation}

\section{The dominated invariant line}
\label{sec:domination}

Define
\begin{equation}\label{eq:BN-KN}
 B_N(z)=B(z)B(Tz)\cdots B(T^{N-1}z),\qquad
 K_N(z)=B_N(R^N z)^{-1}.
\end{equation}
Thus $K_N(z)$ maps the fiber over $R^N z$ to the fiber over $z$.  Since
$B_N$ and $P_N$ differ by a unit scalar, they have the same singular lines
and ratios.  The finite stable center is
\[
 C(z)=\ell_1(K_{16}(z))=r_2(P_{16}(R^{16}z)).
\]
The singular gap makes its spectral projector smooth.  Applying
\eqref{eq:B-covariance} to the full $B_{16}$ product shows projectively that
$C(z+e_j)=U_j(z)C(z)$; no globally phased singular vector is chosen.

\begin{lemma}[Complex projective cone estimate]\label{lem:cone}
Suppose an invertible two-by-two matrix has singular ratio at most
$\varepsilon=10^{-3}$.  Suppose its dominant input line has overlap greater
than $1/4$ with the center of an incoming unitary slope chart.  Then its
projective action maps the disk of radius $\rho=1/100$ in that chart into the
disk of radius $1/200$ about its dominant output line, and has Lipschitz
constant smaller than $1/50$.
\end{lemma}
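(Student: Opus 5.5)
Reduce to a normal form by singular value decomposition: write $M=\sigma_1 U\operatorname{diag}(1,r)V^*$ with $r\le\varepsilon$. Projective action, Lipschitz constants and chart disks are invariant under unitary changes of coordinates and scalings, so we may take $M=\operatorname{diag}(1,r)$. In this normal form the dominant input line $r_1$ and the dominant output line $\ell_1$ are both $e_1$. In the affine chart $[1:w]$ about $e_1$ the map is $w\mapsto rw$.

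The incoming unitary slope chart has center a unit vector $c$ with $\abs{\ip{c}{e_1}}>1/4$, and orthogonal complement $c^\perp$. Its points are the lines through $c+tc^\perp$ with $\abs t\le\rho=1/100$. Write $c=(c_1,c_2)$ with $\abs{c_1}>1/4$, so $\abs{c_2}<\sqrt{15}/4$, and take $c^\perp=(-\bar c_2,\bar c_1)$.

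The chart point $c+tc^\perp$ has first coordinate $c_1-t\bar c_2$. Its modulus is at least
\[
 \tfrac14-\tfrac{1}{100}\cdot\tfrac{\sqrt{15}}4>0.24.
\]
Its second coordinate has modulus at most $\abs{c_2}+\rho\abs{c_1}\le\sqrt{1+\rho^2}<1.01$. Hence its $e_1$-chart coordinate $w(t)$ satisfies $\abs{w}<1.01/0.24<4.3$. After applying $M$, the image has chart coordinate $rw$ with $\abs{rw}<4.3\times10^{-3}<1/200$. This proves the containment, as long as the chart-disk radius about $\ell_1$ is measured in the same affine slope coordinate; if the output disk is measured in chordal metric instead, the same bound holds since the chordal distance is at most the slope.

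For the Lipschitz bound, differentiate the composite map
\[
 t\mapsto rw(t),\qquad w(t)=\frac{c_2+t\bar c_1}{c_1-t\bar c_2}.
\]
We get
\[
 w'(t)=\frac{\bar c_1c_1+\bar c_2c_2}{(c_1-t\bar c_2)^2}=\frac1{(c_1-t\bar c_2)^2},
\]
so $\abs{w'}<1/0.24^2<17.4$. Multiplying by $r\le10^{-3}$ gives a Lipschitz constant below $0.0174<1/50$ on the convex disk, with respect to the slope coordinates on both sides.

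The main obstacle is bookkeeping rather than analysis. The paper's charts must be fixed precisely: an incoming chart centered at $c$, and an outgoing chart centered at $\ell_1$ in unitary slope coordinates. The Lipschitz constant must be measured consistently, for instance slope-to-slope, so that these estimates compose in the later graph-transform contraction.

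If the outgoing metric is instead the chart at a nearby center, I would pass through the $e_1$ chart. Near the origin the transition map between unitary charts whose centers are $O(\varepsilon)$ apart has derivative $1+O(\varepsilon)$, and the margin $0.0174$ versus $0.02$ absorbs this. I would also check that the $1/4$ overlap threshold in \eqref{eq:domination-and-junction} is exactly what makes $\abs{c_1}>1/4$, consistent with the phase-independent overlap notation $\abs{\ip{L}{L'}}$.
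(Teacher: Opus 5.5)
Your proposal is correct and is essentially the paper's argument. Your slope map $t\mapsto (c_2+t\bar c_1)/(c_1-t\bar c_2)$ is exactly the paper's M\"obius chart change $\Psi(w)=(q+w)/(1-\bar q\,w)$ with $q=c_2/c_1$, after fixing the phase of $c_1$; both proofs bound its modulus and derivative on the radius-$1/100$ disk and then multiply by the singular ratio. The only difference is cosmetic: you bound directly from $|c_1|>1/4$, giving $4.3$ and $17.4$, while the paper uses $|q|<4$, giving roughly $4.18$ and $18.5$, and both sets of constants clear $1/200$ and $1/50$.
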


\begin{proof}
Choose phases so that the incoming center and its orthogonal complement are
\[
 c=\frac{a+qb}{\sqrt{1+|q|^2}},\qquad
 n=\frac{-\overline q\,a+b}{\sqrt{1+|q|^2}},
\]
where $a,b$ are the dominant and subordinate singular input vectors.  The
overlap assumption gives $|q|<\sqrt{15}<4$.  A line represented by $c+wn$
has singular-chart slope
\[
 \Psi(w)=\frac{q+w}{1-\overline q\,w}.
\]
For $|w|\le\rho$, the denominator is nonzero and
\[
 |\Psi(w)|\le\frac{|q|+\rho}{1-|q|\rho},\qquad
 |\Psi'(w)|=\frac{1+|q|^2}{|1-\overline q w|^2}
 \le\frac{1+|q|^2}{(1-|q|\rho)^2}.
\]
In singular coordinates the matrix multiplies slopes by at most
$\varepsilon$.  Therefore the image radius and derivative are bounded by
\[
 \frac1{1000}\frac{4+1/100}{1-4/100}<\frac1{200},\qquad
 \frac1{1000}\frac{1+4^2}{(1-4/100)^2}<\frac1{50}.
\]
\end{proof}

\begin{lemma}[Invariant sections over a translation]\label{lem:section-theorem}
Let $\mathcal D\to\T^2$ be a smooth bundle of closed complex disks with
unitary transition maps, let $f$ be a torus translation, and let
\[
 \Phi_z:\mathcal D_{fz}\longrightarrow\operatorname{int}\mathcal D_z
\]
be a smooth bundle map.  Suppose that, in the unitary disk charts,
\[
 \sup_{z,w}\norm{D_w\Phi_z(w)}\le\kappa<1.
\]
Then $(\Gamma s)(z)=\Phi_z(s(fz))$ has a unique continuous fixed section,
and this section is $C^\infty$.
\end{lemma}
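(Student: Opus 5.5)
The plan is the contraction mapping principle on the Banach space of continuous sections, followed by a bootstrap for smoothness.

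\textbf{Existence and uniqueness.} Let $\mathcal X$ be the set of continuous sections $s$ of $\mathcal D$, with distance
\[
 d(s,s')=\sup_z\abs{s(z)-s'(z)}
\]
measured in the unitary disk charts. Because the transition maps are unitary, this distance does not depend on the chart chosen over each point. Since the fibres are closed disks, $\mathcal X$ is a complete metric space.

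The map $\Gamma$ sends $\mathcal X$ into itself. Indeed, $z\mapsto s(fz)$ is continuous because $f$ is a homeomorphism, and $\Phi$ is continuous as a bundle map. Each fibre is convex, so the mean value inequality applied along the segment joining $s(fz)$ and $s'(fz)$ gives
\[
 \abs{\Phi_z(s(fz))-\Phi_z(s'(fz))}\le\kappa\abs{s(fz)-s'(fz)}.
\]
Hence $d(\Gamma s,\Gamma s')\le\kappa\, d(s,s')$. Banach's fixed point theorem then yields a unique continuous fixed section $s_*$, together with the estimate $d(\Gamma^n s_0,s_*)\le\kappa^n d(s_0,s_*)$ for every $s_0\in\mathcal X$.

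\textbf{Smoothness.} A fibre contraction over an arbitrary base map would only give H\"older regularity. Here $f$ is a translation, so $Df=I$ and the base is neither expanded nor contracted. I would use this to run the fibre contraction theorem of Hirsch--Pugh--Shub inductively on jets.

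Formally differentiating $s=\Gamma s$ in a direction $e$ gives
\[
 \partial_e s(z)=(\partial_e\Phi_z)(s(fz))+D_w\Phi_z(s(fz))\,\partial_e s(fz),
\]
because $\partial_e f=e$. This is an affine map on the space of bounded continuous sections $\sigma$ of the derivative bundle $\mathrm{Hom}(\R^2,\C)$ over $\T^2$. Its linear part has norm at most $\kappa$, so it has a unique continuous fixed point $\sigma_*$. The derivative and jet bundles inherit unitary charts, and the transition maps are smooth, so these formulas are chart-independent.

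To identify $\sigma_*$ with $Ds_*$, start from a smooth $s_0$ and set $s_n=\Gamma^n s_0$. Each $s_n$ is smooth. The pairs $(s_n,Ds_n)$ are exactly the iterates of the combined fibre contraction on $(s,\sigma)$, so $Ds_n\to\sigma_*$ uniformly. Since also $s_n\to s_*$ uniformly, $s_*$ is $C^1$ with $Ds_*=\sigma_*$.

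For the $k$-th derivative, the chain and Fa\`a di Bruno rules give
\[
 D^k s(z)=D_w\Phi_z(s(fz))\,D^k s(fz)+\bigl(\text{terms involving }D^j s,\ j<k,\text{ and derivatives of }\Phi\bigr).
\]
The base map contributes no factor $\norm{Df}^k$, because $f$ is a translation. The linear part therefore still has norm at most $\kappa<1$, independently of $k$. Induction on $k$ with the same uniform-convergence argument then shows that $s_*\in C^k$ for every $k$.

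\textbf{Main obstacle.} The one delicate point is the derivative bookkeeping in the unitary charts. I would check two things: that the chart transitions, being smooth and unitary, preserve both the contraction constant $\kappa$ and the invariance of the disk bundle; and that the higher-order forcing terms are uniformly bounded on compact $\T^2$. The second holds because $\Phi$ is smooth on a compact bundle. Beyond that check, the argument is standard.
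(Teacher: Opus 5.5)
Your proof is correct. Its first half is the paper's: the $\kappa$-contraction on continuous sections, with the sup metric made chart-independent by unitarity.

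The two routes differ only in how the invariant jet is identified with the true derivative of $s_*$. The paper works directly on the fixed section. Because $f(z+h)=fz+h$, the coordinate difference quotient $\Delta_h s_*$ is itself the fixed point of an affine $\kappa$-contraction $\sigma\mapsto a_h+L_h\,(\sigma\circ f)$, where $L_h$ is an average of $D_w\Phi_z$ along the segment from $s_*(fz)$ to $s_*(fz+h)$. These coefficients converge uniformly to those of the formal derivative equation. Stability of fixed points under uniform perturbation then gives $\Delta_h s_*\to\sigma_*$, and higher orders follow by induction.

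You instead iterate $\Gamma$ from a smooth seed, recognize $(s_n,Ds_n)$ as an orbit of the skew-product jet map, and invoke the Hirsch--Pugh--Shub fiber contraction theorem together with uniform convergence of derivatives. The paper's version is more elementary: it needs no smooth seed and no fiber contraction theorem, only the bound $\norm{\sigma_h-\sigma_*}\le(1-\kappa)^{-1}\sup\abs{(a_h-a)+(L_h-L)(\sigma_*\circ f)}$. However, it leans on the translation structure. Yours is the standard $C^r$-section argument and adapts to non-isometric smooth base maps under the bunching condition $\kappa\sup\norm{Df}^r<1$.

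Two small points to tidy:
\begin{enumerate}
\item The smooth seed $s_0$ exists because unitary transitions fix the disk centers, so the center section is smooth.
\item When the transitions depend on $z$, the first-jet transitions are affine, $\sigma'=U\sigma+(DU)s$, not unitary as you claim. Only differences of derivative fields over a common base section transform unitarily. That suffices for the $\kappa$-estimate, but the metric used in the fiber contraction theorem should be fixed via a finite atlas or a reference connection.
\end{enumerate}
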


\begin{proof}
The space of continuous sections of $\mathcal D$ is nonempty and complete for
the supremum of the fiber metric.  Unitary transitions make the metric
independent of the chart.  The mean-value inequality makes $\Gamma$ a
$\kappa$-contraction, proving existence and uniqueness.

We justify regularity without differentiating a merely continuous section.
The graph transform acts on $r$-jets.  Once the invariant $(r-1)$-jet is
known, the transformed derivative of exact order $r$ is affine in that
derivative, with linear part
\[
 D_w\Phi_z(s(fz))\circ(Df)^{\otimes r}.
\]
For a translation, $Df=I$ and all higher derivatives of $f$ vanish.  Thus
the order-$r$ jet transform again contracts by at most $\kappa$ and has a
unique invariant continuous jet.

To see that these formal jets are the derivatives of $s$, take coordinate
difference quotients of $s(z)=\Phi_z(s(fz))$.  Translations commute with
difference quotients.  The quotients therefore satisfy affine contraction
equations whose coefficients converge uniformly to the displayed jet
equation.  Stability of fixed points under uniform perturbation proves
convergence to the invariant first jet.  Repeating with higher difference
quotients proves the assertion by induction.  The construction agrees on
chart overlaps.  This is the invariant-section theorem specialized to an
isometric base; compare \cite[Chapter~3]{HPS}.
\end{proof}

\begin{proposition}[Smooth dominated splitting]\label{prop:splitting}
There are smooth line fields $E^s,E^u$ with
$\C^2=E^s(z)\oplus E^u(z)$ such that
\begin{equation}\label{eq:N-invariance}
 B_{16}(z)E^{s/u}(T^{16}z)=E^{s/u}(z).
\end{equation}
The stable line lies within slope $1/200$ of $C(z)$, and the splitting is
uniformly dominated.
\end{proposition}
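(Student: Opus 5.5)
The plan is to produce $E^s$ and $E^u$ as fixed sections of graph transforms via \cref{lem:section-theorem}, using \cref{lem:cone} for the contraction and \eqref{eq:domination-and-junction} for the hypotheses.

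\emph{Stable line.} Over each $z$, consider the closed disk $\mathcal D_z$ of radius $\rho=1/100$ about $C(z)$ in the unitary slope chart centred at $C(z)$. Since $C(z+e_j)=U_j(z)C(z)$ projectively, these disks form a smooth bundle with unitary transitions; smoothness of $C$ follows from the singular gap \ref{cert:gap}. The relevant map is $\Phi_z=$ the projective action of $K_{16}(z)=B_{16}(R^{16}z)^{-1}$, carrying lines over $R^{16}z$ to lines over $z$, so $f=R^{16}$ is a translation.
\begin{enumerate}[label=\textup{(\roman*)},leftmargin=2.2em]
\item The singular ratio of $K_{16}(z)$ equals that of $P_{16}(R^{16}z)$, which is $<10^{-3}$ by \eqref{eq:domination-and-junction}.
\item The dominant output line of $K_{16}(z)$ is $\ell_1(K_{16}(z))=C(z)$.
\item The dominant input line of $K_{16}(z)$ is $\ell_2(P_{16}(R^{16}z))$. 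We need its overlap with the chart centre $C(R^{16}z)=r_2(P_{16}(R^{32}z))$ to exceed $1/4$.
\end{enumerate}
For (iii), apply the junction estimate in \eqref{eq:domination-and-junction} at the point $R^{32}z$. It gives $|\langle r_1(P_{16}(R^{32}z)),\ell_1(P_{16}(R^{16}z))\rangle|>1/4$. In $\C^2$, orthogonal complements preserve the absolute overlap of lines, so $|\langle r_2,\ell_2\rangle|=|\langle r_1,\ell_1\rangle|>1/4$, which is exactly what (iii) requires.

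\cref{lem:cone} then shows that $\Phi_z$ maps $\mathcal D_{R^{16}z}$ into the disk of radius $1/200$ about $C(z)$, which lies in $\operatorname{int}\mathcal D_z$, with Lipschitz constant $<1/50$. \cref{lem:section-theorem} yields a unique smooth invariant section $E^s$ within slope $1/200$ of $C$. Its invariance $K_{16}(z)E^s(R^{16}z)=E^s(z)$ is equivalent to $B_{16}(z)E^s(T^{16}z)=E^s(z)$ after replacing $z$ by $T^{16}z$.

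\emph{Unstable line.} Run the symmetric argument forward, with centre $C'(z)=\ell_1(P_{16}(z))$ and map $B_{16}(z)$ acting from the fibre over $T^{16}z$ (so $f=T^{16}$).
\begin{enumerate}[label=\textup{(\roman*)},leftmargin=2.2em]
\item The dominant input line is $r_1(P_{16}(z))$.
\item The incoming centre is $\ell_1(P_{16}(T^{16}z))$, and the junction estimate in \eqref{eq:domination-and-junction} at $z$ gives overlap $>1/4$.
\item \cref{lem:cone} and \cref{lem:section-theorem} then give a smooth $E^u$ within $1/200$ of $\ell_1(P_{16}(z))$.
\end{enumerate}
Covariance of the bundle again follows from \eqref{eq:B-covariance}.

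\emph{Transversality and domination.} $E^s$ lies within slope $1/200$ of $\ell_2(P_{16}(R^{16}z))$'s complement-related line $C(z)$, while $E^u$ lies near $\ell_1(P_{16}(z))$. Transversality needs $C(z)$ and $\ell_1(P_{16}(z))$ to be uniformly non-parallel. I would obtain this by noting that $C(z)=B_{16}(z)^{-1}$-image data relates to $\ell_1(B_{16}(z))$ at a shifted point. Alternatively, I would argue directly that $E^s\ne E^u$:
\begin{enumerate}[label=\textup{(\roman*)},leftmargin=2.2em]
\item $E^u$ is a forward attractor; its iterates under $B_{16}$ land near $\ell_1$.
\item $E^s$ is backward invariant and is contracted relative to $E^u$ by iterates of $P_{16}$, with singular ratio $10^{-3}$.
\item Uniform overlaps $>1/4$ then give growth ratios $\le c\cdot 10^{-3k}$ along $k$ blocks. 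This is the domination, and it forces $E^s\ne E^u$, with a uniform angle by compactness and continuity.
\end{enumerate}

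I expect the main obstacle to be this transversality and domination step, specifically bookkeeping the uniform angle between the two fixed sections. The index chasing of which junction point feeds which chart is routine by comparison.
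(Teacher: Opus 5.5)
Your constructions of $E^s$ and $E^u$ are the paper's. The stable line comes from the inverse graph transform by $K_{16}$ on the $1/100$-cone about $C$, with the bottom--bottom junction obtained from \eqref{eq:domination-and-junction} at $R^{32}z$ by passing to orthogonal complements. The unstable line comes from the forward transform about $\ell_1(P_{16}(z))$, fed by the junction at $z$.

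The gap is the rest of the statement: $\C^2=E^s(z)\oplus E^u(z)$ and uniform domination, which you do not prove. Your second route is circular. To say $E^s$ is ``contracted relative to $E^u$'' you need a lower bound on the growth of $E^u$ through a block. That requires already knowing $E^u$ stays away from the contracted input line of the block, which is the transversality you want. Compactness can upgrade pointwise transversality to a uniform angle, but it cannot produce it.

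Your first route names the right pair of lines but not why they are separated, and the bookkeeping is off. $C(z)=r_2(P_{16}(R^{16}z))$ is the contracted input line of the block $B_{16}(R^{16}z)$. It is not ``$B_{16}(z)^{-1}$-image data'', nor a complement of $\ell_2(P_{16}(R^{16}z))$, which lives in the fibre over $R^{16}z$. Also, non-parallel centres are not enough, since $E^s$ and $E^u$ are only located to within cone width $1/200$.

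The missing step uses data you already have. The junction inequality in \eqref{eq:domination-and-junction}, applied at $R^{16}z$, says that $\ell_1(P_{16}(z))$ has overlap greater than $1/4$ with $r_1(P_{16}(R^{16}z))=C(z)^\perp$. So in the chart centred at $C(z)^\perp$, $E^u(z)$ has slope at most $(4+1/200)/(1-4/200)<21/5$. Meanwhile $E^s(z)$, lying within slope $1/200$ of $C(z)$, has slope at least $200$. This gives transversality with an explicit uniform angle; the paper makes the same comparison at the fibre over $T^{16}z$, the source of the block $P_{16}(z)$.

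Now write unit vectors of $E^s(T^{16}z)$ and $E^u(T^{16}z)$ in the right singular basis of $P_{16}(z)$. With $m$ the conorm, this gives
\[
 \frac{\norm{B_{16}(z)|_{E^s(T^{16}z)}}}{m\bigl(B_{16}(z)|_{E^u(T^{16}z)}\bigr)}
 \le\sqrt{10^{-6}+(1/200)^2}\,\sqrt{1+(21/5)^2}<\frac1{40}.
\]
Because the lines are invariant and one-dimensional, these ratios multiply along orbits, giving $(1/40)^k$ over $k$ blocks.

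Your asserted rate $c\cdot10^{-3k}$ does not follow from these estimates. The stable line is only known within slope $1/200$ of $r_2$, so its per-block contraction relative to $\sigma_1$ is of order $1/200$, not $10^{-3}$. Any rate below one suffices for domination, but the rate as you stated it is unsupported.
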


\begin{proof}
For the stable line, apply \cref{lem:cone} to the inverse graph transform
\[
 (\Gamma_s L)(z)=K_{16}(z)L(R^{16}z).
\]
The needed input junction is the bottom-bottom junction of two consecutive
forward blocks.  More explicitly, put $w=R^{32}z$,
$P=P_{16}(w)$, and $Q=P_{16}(T^{16}w)$.  Because each full $B$-product
differs from its $A$-product by a unit scalar, $K_{16}(z)$ is a unit-scalar
multiple of $Q^{-1}$.  Its dominant input line is therefore $\ell_2(Q)$,
while the preceding inverse-block output center is $r_2(P)$.  In dimension
two,
\[
 |\ip{\ell_2(Q)}{r_2(P)}|
 =|\ip{\ell_1(Q)}{r_1(P)}|>\frac14
\]
by \eqref{eq:domination-and-junction}.  The complete metric space of
continuous sewn sections of the radius-$1/100$ cone bundle is mapped into
itself and contracted by $1/50$.  Indeed, covariance
\eqref{eq:B-covariance} and covariance of the center spectral projector show
that the fiber graph maps agree under the unitary transitions $U_1,U_2$.
Thus \cref{lem:section-theorem} applies: its unique fixed point is a smooth
line $E^s$ in the radius-$1/200$ subcone.

For completeness, construct the second line rather than infer it from the
first.  Let
\[
 C_u(z)=\ell_1(P_{16}(z)).
\]
The forward graph transform
\[
 (\Gamma_u L)(z)=B_{16}(z)L(T^{16}z)
\]
acts on the radius-$1/100$ cone bundle about $C_u$.  Its incoming center
$C_u(T^{16}z)$ has overlap greater than $1/4$ with
$r_1(P_{16}(z))$ by the second inequality in
\eqref{eq:domination-and-junction}.  Hence the same cone lemma produces a
unique fixed line $E^u$, within slope $1/200$ of $C_u$.  The same covariance
check and \cref{lem:section-theorem}, now with the base translation $T^{16}$,
show that $E^u$ is smooth and obeys the sewing transitions.

These lines are transverse.  Indeed, at the source of a block $P_{16}(z)$,
$E^s(T^{16}z)$ has slope at most $1/200$ from $r_2(P_{16}(z))$.  Meanwhile
$E^u(T^{16}z)$ has, in the chart about $r_1(P_{16}(z))$, slope at most
\[
 \frac{4+1/200}{1-4/200}<\frac{21}{5}.
\]
In the $r_1$ chart, the stable disk has slope at least $200$, whereas the
unstable line has slope below $21/5$; hence the lines cannot coincide.  For
the domination estimate, a unit vector in the stable line has the form
$(r_2+w r_1)/\sqrt{1+|w|^2}$ with $|w|\le1/200$, while a unit vector in the
unstable line has the form $(r_1+w'r_2)/\sqrt{1+|w'|^2}$ with
$|w'|<21/5$.  Applying the singular-value decomposition gives
\[
 \frac{\norm{B_{16}(z)|_{E^s(T^{16}z)}}}
      {m(B_{16}(z)|_{E^u(T^{16}z)})}
 \le
 \frac{\sqrt{10^{-6}+(1/200)^2}\,
       \sqrt{1+(21/5)^2}}{1}
 <\frac1{40},
\]
where $m$ denotes the conorm and the common factor
$\sigma_1(P_{16}(z))$ has been cancelled.  Iteration proves uniform
domination.
\end{proof}

\begin{lemma}[Uniqueness and one-step invariance]\label{lem:one-step}
The dominated splitting in \cref{prop:splitting} is the unique dominated
splitting of complex index one (real index two after realification) for
$B_{16}$.  Moreover,
\begin{equation}\label{eq:one-step-invariance}
 B(z)E^{s/u}(Tz)=E^{s/u}(z).
\end{equation}
\end{lemma}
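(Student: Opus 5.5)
Two standard facts carry the argument: uniqueness of dominated splittings, and the observation that conjugating the cocycle by one step produces another dominated splitting of the same index.

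\textbf{Uniqueness.} Suppose $\C^2=F^s(z)\oplus F^u(z)$ is another continuous $B_{16}$-invariant dominated splitting with one-dimensional complex lines. The argument runs in three steps.
\begin{enumerate}[label=(\roman*)]
\item Iterating the cocycle $n$ blocks, the stable line of any dominated splitting is characterized as the limit of the most contracted input directions of $B_{16}^{(n)}$ as $n\to\infty$. Likewise, the unstable line is the limit of the most expanded output directions, taken over the inverse orbit.
\item Concretely, write a line $L$ over the source of the $n$-block product as $L=aE^s+bE^u$ with $b\neq0$. Uniform domination with ratio below $1/40$, together with the uniform transversality angle from \cref{prop:splitting}, gives that the forward images of $L$ converge exponentially to $E^u$.
\item If $F^u\neq E^s$ at some point, then $F^u$ is attracted to $E^u$. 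Since $F^u$ is invariant, this forces $F^u=E^u$ there. At points where $F^u=E^s$ we argue by contradiction: domination for $F$ requires $F^u$ to grow faster than $F^s$, while $E^s$ is the exponentially slowest direction. Hence $F^s$ would have to be contracted even faster, which is impossible in dimension two, because any line other than $E^s$ is eventually dominated by $E^u$ growth.
\end{enumerate}
This gives $F^u=E^u$, and symmetrically (using inverse iterates) $F^s=E^s$. Compactness of $\T^2$ and continuity make the constants uniform, so no measure-theoretic Oseledets input is needed.

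\textbf{One-step invariance.} Define $\tilde E^{s/u}(z)=B(z)E^{s/u}(Tz)$. This field is continuous and in fact smooth, since $B$ is smooth and invertible by \cref{lem:conditioning}. It also obeys the sewing laws: by \eqref{eq:B-covariance}, $B(z+e_j)U_j(Tz)=U_j(z)B(z)$, and the lines $E^{s/u}$ themselves transform by $U_j$.

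Because $T$ commutes with $T^{16}$, we have $B_{16}(z)B(T^{16}z)=B(z)B_{16}(Tz)$, where both sides equal the product $B(z)\cdots B(T^{16}z)$. Hence
\[
B_{16}(z)\tilde E(T^{16}z)=B(z)B_{16}(Tz)E(T^{17}z)=B(z)E(Tz)=\tilde E(z),
\]
so $\tilde E^{s/u}$ is $B_{16}$-invariant.

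Domination also transfers. For $n$ blocks, $B_{16n}(z)\,B(T^{16n}z)=B(z)\,B_{16n}(Tz)$, so the norm and conorm ratios for $\tilde E$ along the $n$-block product differ from those for $E$ at $Tz$ by factors at most $\norm{B}\,\norm{B^{-1}}\le (11/5)(55/7)$, using \cref{lem:conditioning}. Since $\eta$ is unimodular, this is a bounded constant and does not spoil the exponential domination. Transversality of $\tilde E^s$ and $\tilde E^u$ holds because $B(z)$ is invertible.

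By uniqueness, $\tilde E^{s/u}=E^{s/u}$, which is \eqref{eq:one-step-invariance}.

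\textbf{Main obstacle.} The delicate step is the uniqueness argument, specifically excluding a competing splitting whose unstable line coincides with $E^s$ at some points. I would handle it by the growth-rate comparison above: the ratio $\norm{B_{16n}|_{E^s}}/\norm{B_{16n}|_{L}}$ is at most a constant times $40^{-n}$ for every line $L\neq E^s$. This shows $E^s$ is the unique line with that minimal growth, so it cannot serve as the dominating direction of any other splitting.
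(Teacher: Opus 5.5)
Your proposal is correct and follows the paper's proof. Both use uniqueness of dominated splittings together with the identity $B_{16k}(z)B(T^{16k}z)=B(z)B_{16k}(Tz)$, which makes $B(z)E^{s/u}(Tz)$ another $B_{16}$-invariant dominated splitting whose ratio is worse only by a bounded factor: the paper's $M^4$, since your per-line factor $\|B\|\,\|B^{-1}\|$ enters the ratio squared.

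The one difference is in the uniqueness step. The paper cites Bochi--Gourmelon and sketches uniqueness by nested cone intersections, while you argue by growth comparison. That comparison works best run pointwise, as in your final paragraph: any line $L\neq E^s(w)$ grows at the $E^u$ rate, so $F^s=E^s$, and symmetrically $F^u=E^u$ under inverse iterates. As written, the claim in step (iii) that the argument ``forces $F^u=E^u$ there'' at a single point is not justified. It needs $F^u$ uniformly transverse to $E^s$ along the whole orbit, which you only have after excluding $F^u=E^s$ everywhere and invoking compactness.
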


\begin{proof}
We use the standard uniqueness theorem for dominated splittings of prescribed
fiber dimensions \cite{BochiGourmelon}, applied after realification with real
fiber dimensions $2+2$ (or equivalently in complex projective space).  In the
present two-dimensional complex setting, its proof is short: domination produces strictly
invariant stable and unstable cone fields after a uniform iterate.  The stable
line is the intersection of the nested inverse images of the stable cones,
and the unstable line is the intersection of the nested forward images of
the unstable cones.  These intersections do not depend on a choice of
splitting, proving uniqueness.

Set $\widetilde E^{s/u}(z)=B(z)E^{s/u}(Tz)$.  The identity
\[
 B_{16}(z)B(T^{16}z)=B(z)B_{16}(Tz)
\]
shows that $\widetilde E^{s/u}$ is another $B_{16}$-invariant splitting.
Here is the domination comparison explicitly.  Let
\[
 M=\max\!\left\{1,\sup_z\norm{B(z)},\sup_z\norm{B(z)^{-1}}\right\}.
\]
For every $k\ge1$, the iterated cocycle identity is
\[
 B_{16k}(z)B(T^{16k}z)=B(z)B_{16k}(Tz).
\]
If the domination ratio for $E^s\oplus E^u$ is bounded by $C\lambda^k$,
$0<\lambda<1$, then restriction of this identity to the transformed lines,
using $B$ at both endpoints, bounds the corresponding ratio for
$\widetilde E^s\oplus\widetilde E^u$ by
\[
 M^4C\lambda^k.
\]
Thus the transformed splitting is dominated with the same index.  Uniqueness
gives \eqref{eq:one-step-invariance}.
\end{proof}

Uniqueness and \eqref{eq:B-covariance} also give the exact projective sewing
\[
 E^s(z+e_j)=U_j(z)E^s(z).
\]
If $P_s(z)$ is the orthogonal projection onto $E^s(z)$, unitarity of the
sewing matrices yields
\begin{equation}\label{eq:projector-sewing}
 P_s(z+e_j)=U_j(z)P_s(z)U_j(z)^*.
\end{equation}

\section{Exact gauges and zero winding}
\label{sec:winding}

By~\ref{cert:smooth-overlap} and the slope-$1/200$ estimate,
$P_s(z)\chi(z)$ never vanishes.  Indeed, in an orthonormal frame $(e,n)$
with $e$ spanning $C(z)$, the exact line is represented by $e+wn$ with
$|w|\le1/200$, and therefore
\[
 |\ip{\chi(z)}{e+wn}|\ge
 |\ip{\chi(z)}e|-|w|\,|\ip{\chi(z)}n|
 >\frac12-\frac1{200}>0.
\]
Define
\begin{equation}\label{eq:v-smooth}
 v_{\mathrm{sm}}(z)=
 \frac{P_s(z)\chi(z)}
 {\ip{\chi(z)}{P_s(z)\chi(z)}}.
\end{equation}
It is smooth, spans $E^s$, and is normalized by
$\ip{\chi}{v_{\mathrm{sm}}}=1$.  Equations
\eqref{eq:projector-sewing} and \eqref{eq:smooth-chi} show that it satisfies
the exact vector-Zak sewing relations.  By \cref{lem:one-step}, there is a
unique smooth nonzero periodic scalar $q_{\mathrm{sm}}$ such that
\begin{equation}\label{eq:qsm-definition}
 B(z)v_{\mathrm{sm}}(Tz)=q_{\mathrm{sm}}(z)v_{\mathrm{sm}}(z).
\end{equation}
Periodicity follows by inserting the twisted covariance
\eqref{eq:B-covariance}; ordinary matrix periodicity of $B$ is not used.

Project $\chi_{\mathrm{lin}}$ onto the same line and normalize:
\begin{equation}\label{eq:v-linear}
 v_{\mathrm{lin}}(z)=
 \frac{P_s(z)\chi_{\mathrm{lin}}(z)}
 {\ip{\chi_{\mathrm{lin}}(z)}{P_s(z)\chi_{\mathrm{lin}}(z)}}.
\end{equation}
On the fundamental square,
\[
 \norm{\chi_{\mathrm{lin}}(x,\omega)}^2
 =\sin^2(\pi x/2)+\cos^2(\pi x/2)=1.
\]
Moreover,
\[
 U_1(\omega)\chi_{\mathrm{lin}}(0,\omega)
   =\chi_{\mathrm{lin}}(1,\omega),\qquad
 U_2\chi_{\mathrm{lin}}(x,0)
   =\chi_{\mathrm{lin}}(x,1).
\]
Thus its sewn extension is continuous, unit norm, and nowhere zero, including
at the corner seams.
The overlap part of~\ref{cert:linear-gauge} makes this a continuous,
nowhere-zero vector-Zak section.  There is therefore a unique continuous
nonzero scalar $q_{\mathrm{lin}}$ such that
\begin{equation}\label{eq:qlin-definition}
 B(z)v_{\mathrm{lin}}(Tz)=q_{\mathrm{lin}}(z)v_{\mathrm{lin}}(z).
\end{equation}
The exact sewing of $v_{\mathrm{lin}}$ and the twisted covariance
\eqref{eq:B-covariance} make $q_{\mathrm{lin}}$ periodic, so it defines a
continuous map $\T^2\to\C^*$.

\begin{lemma}[Finite-center transfer]\label{lem:finite-center-transfer}
For every $z\in\T^2$,
\begin{equation}\label{eq:exact-half-plane}
 \operatorname{Re}\!\left(
 \e^{-3\pi\ii/7}q_{\mathrm{lin}}(z)\right)>
 \frac{19}{180}>0.
\end{equation}
\end{lemma}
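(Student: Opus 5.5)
The plan is to write both multipliers as the same bilinear expression evaluated on two nearby normalized vectors, and then bound the difference using the slope estimate from \cref{prop:splitting} together with the operator bound in \cref{lem:conditioning}.

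\textbf{Step 1: a formula for $q_{\mathrm{lin}}$.} Pair \eqref{eq:qlin-definition} with $\chi_{\mathrm{lin}}(z)$. The normalization $\ip{\chi_{\mathrm{lin}}}{v_{\mathrm{lin}}}=1$ then gives
\[
 q_{\mathrm{lin}}(z)=\ip{\chi_{\mathrm{lin}}(z)}{B(z)v_{\mathrm{lin}}(Tz)}.
\]
By \eqref{eq:qref}, the certified quantity has the same form:
\[
 q_{\mathrm{ref}}(z)=\ip{\chi_{\mathrm{lin}}(z)}{B(z)c_{\mathrm{ref}}(Tz)}.
\]
Since $\norm{\chi_{\mathrm{lin}}}=1$ and $|\eta|=1$, \cref{lem:conditioning} yields
\[
 |q_{\mathrm{lin}}(z)-q_{\mathrm{ref}}(z)|\le\frac{11}{5}\,\norm{v_{\mathrm{lin}}(y)-c_{\mathrm{ref}}(y)},\qquad y=Tz.
\]
This reduces the lemma to a pointwise comparison of the two normalized representatives at $y$.

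\textbf{Step 2: comparing the normalized representatives.} Fix $y$. Choose an orthonormal frame $(e,n)$ with $e$ spanning $C(y)$; this is the finite center used in \eqref{eq:cref}, built from $P_{16}(y+16\tau)$. By \cref{prop:splitting}, $E^s(y)$ is spanned by $e+wn$ with $|w|\le 1/200$. Both normalized vectors are independent of the choice of spanning vector, so
\[
 c_{\mathrm{ref}}(y)=\frac{e}{\ip{\chi}{e}},\qquad
 v_{\mathrm{lin}}(y)=\frac{e+wn}{\ip{\chi}{e+wn}},
\]
with $\chi=\chi_{\mathrm{lin}}(y)$. Putting the difference over a common denominator gives
\[
 v_{\mathrm{lin}}-c_{\mathrm{ref}}
 =\frac{w\bigl(\ip{\chi}{e}\,n-\ip{\chi}{n}\,e\bigr)}{\ip{\chi}{e}\,\ip{\chi}{e+wn}}.
\]
\begin{itemize}
\item The numerator has norm $|w|\bigl(|\ip{\chi}{e}|^2+|\ip{\chi}{n}|^2\bigr)^{1/2}=|w|\le 1/200$, because $\chi$ is a unit vector.
\item The overlap part of \ref{cert:linear-gauge} gives $|\ip{\chi}{e}|>1/2$.
\item As in the argument preceding \eqref{eq:v-smooth}, $|\ip{\chi}{e+wn}|>1/2-1/200=99/200$.
\end{itemize}
Hence
\[
 \norm{v_{\mathrm{lin}}(y)-c_{\mathrm{ref}}(y)}<\frac{1/200}{(1/2)(99/200)}=\frac2{99}.
\]

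\textbf{Step 3: conclusion.} Combining the two steps,
\[
 |q_{\mathrm{lin}}-q_{\mathrm{ref}}|<\frac{11}{5}\cdot\frac{2}{99}=\frac{2}{45}.
\]
Multiplication by the unimodular factor $\e^{-3\pi\ii/7}$ changes real parts by at most the modulus of the difference. So \eqref{eq:linear-cert} gives
\[
 \operatorname{Re}\bigl(\e^{-3\pi\ii/7}q_{\mathrm{lin}}(z)\bigr)>\frac3{20}-\frac2{45}=\frac{19}{180}.
\]

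\textbf{Main obstacle.} The delicate point is bookkeeping rather than analysis. The certified projector in \eqref{eq:qref} is evaluated at $Tz$, built from $P_{16}(z+15\tau)$. This is exactly the center $C(Tz)$ that \cref{prop:splitting} compares with $E^s(Tz)$, so the slope bound is applied at the correct point. One must also check that both formulas use the same sewing chart for $\chi_{\mathrm{lin}}(Tz)$. This holds because both sides are sewn sections and $q_{\mathrm{lin}}$ is periodic, so the estimate may be carried out on the fundamental square, rectangle by rectangle, exactly as the certificate is organized.
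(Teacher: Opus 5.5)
Your proposal is correct and follows the paper's proof essentially step for step. It uses the same pairing identity $q_{\mathrm{lin}}(z)=\ip{\chi_{\mathrm{lin}}(z)}{B(z)v_{\mathrm{lin}}(Tz)}$, the same common-denominator comparison of the two $\chi_{\mathrm{lin}}$-normalized representatives of $C(Tz)$ and $E^s(Tz)$ yielding $2/99$, and the operator bound $11/5$, giving the margin $3/20-2/45=19/180$. The chart bookkeeping you flag is harmless but not needed, since every quantity involved is defined on $\R^2$ and the comparison is purely pointwise.
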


\begin{proof}
At $y=Tz$, choose an orthonormal frame $(e,n)$ with $e$ spanning $C(y)$.
Write the exact line as $e+wn$, where $|w|\le1/200$.  If
$|\ip{\chi_{\mathrm{lin}}(y)}e|\ge1/2$, direct subtraction of the two
$\chi_{\mathrm{lin}}$-normalized representatives gives
\begin{equation}\label{eq:gauge-perturbation}
 \norm{v_{\mathrm{lin}}(y)-c_{\mathrm{ref}}(y)}
 \le\frac{1/200}{(1/2)(1/2-1/200)}=\frac2{99}.
\end{equation}
Here the numerator has no extra factor two because
$\norm{\ip{\chi}e\,n-\ip{\chi}n\,e}=1$ for an orthonormal basis.
Since $\chi_{\mathrm{lin}}$ has unit norm and both representatives are
normalized by their inner product with it, \eqref{eq:qref} and
\eqref{eq:qlin-definition} give
\[
 q_{\mathrm{lin}}(z)=
 \ip{\chi_{\mathrm{lin}}(z)}
 {B(z)v_{\mathrm{lin}}(Tz)}.
\]
Using $\norm B=\norm A\le11/5$,
\[
 |q_{\mathrm{lin}}(z)-q_{\mathrm{ref}}(z)|
 \le\frac{11}{5}\frac2{99}=\frac2{45}.
\]
Now~\ref{cert:linear-gauge} gives
\[
 \operatorname{Re}(\e^{-3\pi\ii/7}q_{\mathrm{lin}})
 >\frac3{20}-\frac2{45}=\frac{19}{180}.
\]
\end{proof}

Thus $q_{\mathrm{lin}}:\T^2\to\C^*$ maps into a simply connected open
half-plane and has zero class in $H^1(\T^2;\Z)$.  The two gauges span the
same line, so $v_{\mathrm{sm}}=a v_{\mathrm{lin}}$ for a unique continuous
$a:\R^2\to\C^*$.  Identical sewing makes $a$ periodic, and comparison of the
multiplier equations gives
\begin{equation}\label{eq:gauge-multiplier-transfer}
 q_{\mathrm{sm}}(z)=q_{\mathrm{lin}}(z)\frac{a(Tz)}{a(z)}.
\end{equation}
A torus translation is homotopic to the identity, hence
$[a\circ T/a]=T^*[a]-[a]=0$ in $H^1(\T^2;\Z)$.  Only the winding class,
not the half-plane inequality, is transferred to the smooth gauge.

\begin{proposition}[Smooth logarithm]\label{prop:smooth-log}
There is a smooth periodic $\phi:\R^2\to\C$ such that
\begin{equation}\label{eq:log-q}
 q_{\mathrm{sm}}(z)=\e^{\phi(z)}.
\end{equation}
\end{proposition}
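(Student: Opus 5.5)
The plan is to show that $q_{\mathrm{sm}}:\T^2\to\C^*$ is smooth and has zero class in $H^1(\T^2;\Z)$. A smooth logarithm then follows by standard covering-space lifting.

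First, $q_{\mathrm{sm}}$ is smooth and nowhere zero. From \eqref{eq:qsm-definition} and the normalization $\ip{\chi}{v_{\mathrm{sm}}}=1$ we have
\[
 q_{\mathrm{sm}}(z)=\ip{\chi(z)}{B(z)v_{\mathrm{sm}}(Tz)}.
\]
This is smooth because $\chi$, $B$, $v_{\mathrm{sm}}$ and $T$ are smooth. It is nonzero because $B(z)$ is invertible (\cref{lem:conditioning}) and $v_{\mathrm{sm}}(Tz)\ne0$, so the left side of \eqref{eq:qsm-definition} never vanishes. Periodicity was recorded after \eqref{eq:qsm-definition}.

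Second, the winding class vanishes. \Cref{lem:finite-center-transfer} places $q_{\mathrm{lin}}$ in the open half-plane $\{\operatorname{Re}(\e^{-3\pi\ii/7}w)>0\}$. This half-plane is contractible in $\C^*$, so $[q_{\mathrm{lin}}]=0$. Equation \eqref{eq:gauge-multiplier-transfer} expresses $q_{\mathrm{sm}}$ as the product of $q_{\mathrm{lin}}$ and $a\circ T/a$, and the class of a product of maps into $\C^*$ is the sum of the classes. Since $T$ is homotopic to the identity on $\T^2$, we get $[a\circ T/a]=T^*[a]-[a]=0$. Hence $[q_{\mathrm{sm}}]=0$.

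Third, lift. Pass to the universal cover. The map $\tilde q(z)=q_{\mathrm{sm}}(z)$ on $\R^2$ has a continuous logarithm $\phi$, since $\exp:\C\to\C^*$ is a covering and $\R^2$ is simply connected. For $j=1,2$, the difference $\phi(z+e_j)-\phi(z)$ is continuous and lies in $2\pi\ii\Z$, so it equals a constant $2\pi\ii n_j$. The integer $n_j$ is exactly the winding number of $q_{\mathrm{sm}}$ along the $j$-th generator, which is zero by the second step. So $\phi$ is periodic. Locally $\phi$ is a branch of $\log$ composed with the smooth map $q_{\mathrm{sm}}$, so $\phi$ is smooth.

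The only delicate point is the cohomology bookkeeping in the second step. The maps $a$ and $q_{\mathrm{lin}}$ are merely continuous, which is harmless because degree classes are homotopy invariants of continuous maps. One must also check that $a$ is genuinely periodic, not merely periodic up to a twist. This holds because $v_{\mathrm{sm}}$ and $v_{\mathrm{lin}}$ obey identical sewing laws, as already noted in the text. With these points in place, the rest is routine.
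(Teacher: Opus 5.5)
Your proposal is correct and follows the paper's route. The paper likewise gets $[q_{\mathrm{sm}}]=0$ from the half-plane bound on $q_{\mathrm{lin}}$ together with the gauge transfer \eqref{eq:gauge-multiplier-transfer} and $T\simeq\mathrm{id}$, then lifts through $\exp:\C\to\C^*$ and obtains smoothness because $\exp$ is a local diffeomorphism. You simply make explicit two details the paper leaves implicit: the formula $q_{\mathrm{sm}}=\ip{\chi}{B\,(v_{\mathrm{sm}}\circ T)}$, which gives smoothness and nonvanishing, and the identification of the lift's periods $\phi(z+e_j)-\phi(z)$ with the two winding numbers.
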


\begin{proof}
Equation \eqref{eq:gauge-multiplier-transfer} shows that
$[q_{\mathrm{sm}}]=0$.  The lifting criterion for
$\exp:\C\to\C^*$ gives a continuous periodic lift.  It is smooth because
exponential is a local diffeomorphism and $q_{\mathrm{sm}}$ is smooth.
\end{proof}

\section{Cubic arithmetic and cohomology}
\label{sec:cohomology}

The multiplicative-to-additive reduction in this section follows the
cohomological architecture of \cite[\S7]{FPPV2026}; the explicit
small-divisor bound below is included for completeness.

\begin{lemma}[Explicit Diophantine bound]\label{lem:diophantine}
For every $(m,n)\in\Z^2\setminus\{(0,0)\}$,
\begin{equation}\label{eq:diophantine}
 \norm{m\alpha+n\beta}_{\R/\Z}
 \ge\frac1{12(1+|m|+|n|)^2}.
\end{equation}
Consequently,
\begin{equation}\label{eq:small-divisor}
 \abs{1-\e^{-2\pi\ii(m\alpha+n\beta)}}
 \ge\frac1{3(1+|m|+|n|)^2}.
\end{equation}
\end{lemma}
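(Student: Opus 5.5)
The plan is the standard algebraic-norm argument in the cubic field $\Q(\vartheta)$, with the conjugate moduli kept explicit. Fix $(m,n)\ne(0,0)$. Let $p\in\Z$ be a nearest integer to $m\alpha+n\beta$, and put
\[
 \xi=m\alpha+n\beta-p=a+m\vartheta+n\vartheta^2,\qquad a=-m-n-p\in\Z,
\]
so that $\abs{\xi}=\norm{m\alpha+n\beta}_{\R/\Z}\le\frac12$. By \cref{prop:rational-rank}, or directly by the irreducibility of $X^3-2$, $\xi\ne0$, because the coefficients $(m,n)$ are not both zero. Hence $\xi$ is a nonzero element of $\Z[\vartheta]$. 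Its field norm $N(\xi)=\xi\,\xi'\,\overline{\xi'}$ is therefore a nonzero rational integer, where
\[
 \xi'=a+m\vartheta\varpi+n\vartheta^2\varpi^2,\qquad \varpi=\e^{2\pi\ii/3},
\]
and $\overline{\xi'}$ is the third conjugate. So $\abs{\xi}\,\abs{\xi'}^2\ge1$.

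Next I bound $\abs{\xi'}$ from above in terms of $1+\abs m+\abs n$. From $a=\xi-m\vartheta-n\vartheta^2$ one gets $\abs a\le\frac12+\abs m\vartheta+\abs n\vartheta^2$. Then the triangle inequality gives
\[
 \abs{\xi'}\le\tfrac12+2\abs m\vartheta+2\abs n\vartheta^2.
\]
Using the rational bounds $\vartheta<4/3$ and $\vartheta^2<5/3$ already recorded in the proof of \cref{prop:geometry}, this is at most
\[
 \tfrac12+\tfrac83\abs m+\tfrac{10}3\abs n\le\tfrac{10}{3}(1+\abs m+\abs n).
\]
Therefore
\[
 \abs{\xi}\ge\abs{\xi'}^{-2}\ge\frac{9}{100\,(1+\abs m+\abs n)^2}>\frac1{12(1+\abs m+\abs n)^2},
\]
which is \eqref{eq:diophantine}.

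For \eqref{eq:small-divisor}, write $t=m\alpha+n\beta$. Then $\abs{1-\e^{-2\pi\ii t}}=2\abs{\sin\pi t}$. By concavity of $\sin$ on $[0,\pi/2]$ (Jordan's inequality) this is at least $4\norm{t}_{\R/\Z}$. Combining with \eqref{eq:diophantine} gives the lower bound $\frac{4}{12(1+\abs m+\abs n)^2}=\frac1{3(1+\abs m+\abs n)^2}$.

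There is no real obstacle here. The only points needing care are:
\begin{itemize}
\item that $\xi\ne0$, which uses $(m,n)\ne(0,0)$ and not merely $\xi\notin\Z$;
\item that the two non-real conjugates are complex conjugates of each other, so their product is $\abs{\xi'}^2$;
\item keeping the constant $\frac{10}{3}$ honest so that $\frac{100}{9}<12$.
\end{itemize}
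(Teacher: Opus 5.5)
Your proof is correct and follows the paper's argument essentially verbatim. You write the nearest-integer representative as $\xi=a+m\vartheta+n\vartheta^2$ with $|\xi|\le\frac12$, use that its field norm is a nonzero rational integer, bound the two complex conjugates by the triangle inequality, and finish with $2\sin(\pi r)\ge 4r$. The only cosmetic difference is the constant: you use $\vartheta^2<5/3$ to get the conjugate bound $\frac{10}{3}(1+|m|+|n|)$, whereas the paper uses $\vartheta^2<8/5$ to get $\frac{17}{5}(1+|m|+|n|)$; both squares are below $12$.
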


\begin{proof}
Choose $\ell\in\Z$ so that
\[
 d=\ell+m\alpha+n\beta,\qquad
 |d|=\norm{m\alpha+n\beta}_{\R/\Z}\le\frac12,
\]
and put $k=\ell-m-n$.  Then $d=k+m\vartheta+n\vartheta^2$.  Its algebraic
norm is the integer
\begin{equation}\label{eq:algebraic-norm}
 Q=k^3+2m^3+4n^3-6kmn.
\end{equation}
The element $d$ is nonzero because a rational polynomial of degree at most
two cannot annihilate the degree-three number $\vartheta$.  Nondegeneracy of
the field norm therefore gives $Q\ne0$, and hence $|Q|\ge1$.

The two nonreal conjugates of $d$ each have modulus at most
\[
 |k|+\vartheta|m|+\vartheta^2|n|
 \le\frac12+2\vartheta|m|+2\vartheta^2|n|
 <\frac{17}{5}(1+|m|+|n|),
\]
using $\vartheta^2<8/5$.  Their product is therefore smaller than
$12(1+|m|+|n|)^2$.  Since $Q$ is the product of all three conjugates,
\[
 1\le|Q|<12(1+|m|+|n|)^2|d|,
\]
which proves \eqref{eq:diophantine}.  Finally,
$|1-\e^{-2\pi\ii r}|=2\sin(\pi r)\ge4r$ for
$r=\norm{m\alpha+n\beta}_{\R/\Z}\in[0,1/2]$, giving
\eqref{eq:small-divisor}.
\end{proof}

\begin{proposition}[Smooth multiplicative cohomology]\label{prop:cohomology}
There are a smooth periodic nowhere-zero function $h:\R^2\to\C^*$ and a
constant $\lambda\in\C^*$ such that
\begin{equation}\label{eq:multiplicative-cohomology}
 q_{\mathrm{sm}}(z)h(Tz)=\lambda h(z).
\end{equation}
\end{proposition}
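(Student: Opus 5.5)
Starting from \cref{prop:smooth-log}, write $q_{\mathrm{sm}}=\e^{\phi}$ with $\phi$ smooth and $1$-periodic in both variables, and look for $h=\e^{\psi}$ with $\psi$ smooth and periodic. Then \eqref{eq:multiplicative-cohomology} becomes the additive cohomological equation
\[
 \phi(z)+\psi(Tz)-\psi(z)=c,\qquad \lambda=\e^{c}.
\]
Take $c=\int_{\T^2}\phi$, the mean of $\phi$ over the fundamental square. The periodicity of $\phi$ comes from the periodicity of $q_{\mathrm{sm}}$, which was established via the twisted covariance \eqref{eq:B-covariance}; I will record this explicitly. The continuous lift in \cref{prop:smooth-log} is periodic precisely because the winding class vanishes.

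Expand in Fourier series, $\phi(z)=\sum_{(m,n)}\hat\phi(m,n)\e^{2\pi\ii(mx+n\omega)}$, and similarly for $\psi$. Since $Tz=z-\tau$, we have $\widehat{\psi\circ T}(m,n)=\e^{-2\pi\ii(m\alpha+n\beta)}\hat\psi(m,n)$. Hence for $(m,n)\ne(0,0)$ I set
\[
 \hat\psi(m,n)=\frac{\hat\phi(m,n)}{1-\e^{-2\pi\ii(m\alpha+n\beta)}},
\]
and $\hat\psi(0,0)=0$. By \cref{lem:diophantine}, the denominators in absolute value are at least $1/(3(1+|m|+|n|)^2)$. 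The denominator is nonzero because $1,\alpha,\beta$ are $\Q$-independent (\cref{prop:rational-rank}), which the Diophantine bound already encodes. Because $\phi$ is $C^\infty$, its coefficients decay faster than any polynomial. After multiplying by at most $3(1+|m|+|n|)^2$, the coefficients $\hat\psi$ still decay rapidly, so the series converges absolutely together with all termwise derivatives. Therefore $\psi$ is a smooth periodic function, and the equation holds coefficientwise, hence pointwise.

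Finally, put $h=\e^{\psi}$, which is smooth, periodic and nowhere zero, and put $\lambda=\e^{c}\ne0$. Exponentiating gives $q_{\mathrm{sm}}(z)h(Tz)=\e^{\phi(z)+\psi(Tz)}=\e^{c+\psi(z)}=\lambda h(z)$, which is \eqref{eq:multiplicative-cohomology}.

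The only genuine obstacle is the small-divisor loss, and it is already handled by the explicit cubic bound of \cref{lem:diophantine}. That bound costs only two derivatives, which smoothness absorbs. The remaining step needing care is confirming that $\phi$ is honestly periodic on $\R^2$ rather than merely a lift along paths; this is where zero winding is essential.
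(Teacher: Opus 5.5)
Your proposal is correct and follows the paper's proof essentially verbatim. The paper likewise sets $\widehat u(m,n)=\widehat\phi(m,n)/(1-\e^{-2\pi\ii(m\alpha+n\beta)})$ with $\widehat u(0,0)=0$, uses \cref{lem:diophantine} (losing two powers) against the rapid decay of $\widehat\phi$, and takes $h=\e^{u}$, $\lambda=\e^{\widehat\phi(0,0)}$; your $\psi$ and $c$ are exactly its $u$ and $\widehat\phi(0,0)$.
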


\begin{proof}
Let $\phi$ be the logarithm from \cref{prop:smooth-log}.  Put
$\widehat u(0,0)=0$ and, for $(m,n)\ne(0,0)$, set
\begin{equation}\label{eq:u-fourier}
 \widehat u(m,n)=
 \frac{\widehat\phi(m,n)}
 {1-\e^{-2\pi\ii(m\alpha+n\beta)}}.
\end{equation}
The Fourier coefficients of $\phi$ decay faster than every power, whereas
\eqref{eq:small-divisor} loses only two powers.  Thus \eqref{eq:u-fourier}
defines $u\in C^\infty(\T^2)$ and
\[
 u(z)-u(Tz)=\phi(z)-\widehat\phi(0,0).
\]
Take $h=\e^u$ and $\lambda=\e^{\widehat\phi(0,0)}$.
\end{proof}

\section{Reconstruction and proof of the main theorem}
\label{sec:completion}

\begin{proof}[Proof of \cref{thm:main}]
Set
\[
 F(z)=h(z)v_{\mathrm{sm}}(z).
\]
It is a nonzero smooth vector-Zak section.  Equations
\eqref{eq:qsm-definition} and \eqref{eq:multiplicative-cohomology} give
$B(z)F(Tz)=\lambda F(z)$.  By the exact conjugacy
\eqref{eq:zak-cocycle-action}, $g=\cZtwo^{-1}F$ is a nonzero $L^2$
eigenfunction satisfying \eqref{eq:eigen-relation}.

The ordinary Zak transform is recovered, as in \cite[\S8]{FPPV2026}, by
\[
 (\cZ g)(x,\omega)=\sqrt2\,[F(x,2\omega)]_1.
\]
It is smooth on $\R^2$, so the smooth-Zak characterization
\cite[Theorem~8.2.5]{Gro01} gives $g\in\cS(\R)$.

The Weyl composition law
\[
 \rho(x,\omega)\rho(x',\omega')
 =\e^{\pi\ii(\omega x'-x\omega')}
  \rho(x+x',\omega+\omega')
\]
gives
\[
 \rho(1,0)\rho(\zeta)
 =\e^{-\pi\ii\beta/2}\rho(1+\alpha,\beta/2),\qquad
 \rho(0,1/2)\rho(\zeta)
 =\e^{\pi\ii\alpha/2}\rho\bigl(\alpha,(1+\beta)/2\bigr).
\]
Substitution proves \eqref{eq:four-term-relation}; the geometric claims are
\cref{prop:geometry}.
\end{proof}

\begin{remark}
A floating-point reconstruction gives
$\lambda\approx0.4001247907+0.6645105787\,\ii$.  This is only an orientation
check; existence and nonvanishing follow from \cref{prop:cohomology}.
\end{remark}

\section{Credit and relation to the twelve-point construction}
\label{sec:provenance}

The debt to Faulhuber, Petersen, van Velthoven, and Voigtlaender
\cite{FPPV2026} is substantial.  We adapt their rank-two vector-Zak model and
sewing relations, the cubic torus translation, the flat-step smooth reference
section, the invariant-line and scalar-cohomology architecture, the
Zak-to-Schwartz reconstruction, and their outward-rounded finite-cover
methodology.  These ingredients are not claimed as new here.

The new ingredients are the three-term symbol \eqref{eq:A-definition}, the
intrinsically subcritical four-point geometry, the scalar $16/32$-block
singular-gap and junction polynomials, the smooth- and linear-gauge overlap
certificates, and the rotated-half-plane argument that controls the
multiplier winding.  In this precise sense the present result is a support
compression and a new dominated-cocycle implementation of the twelve-point
breakthrough.

\section{Verification artifacts and formal scope}
\label{sec:formal-scope}

The complete verification supplement, including SHA--256 manifests and
reproduction instructions, is available in the
\href{https://www.dropbox.com/scl/fo/zgsmpuar02zar5eayz1p3/AGHWT_hP_nLAFQBumvb-U7U?rlkey=i76xop0nhajwh7n1o0bzpaqf6\&dl=0}{electronic supplement on Dropbox}.
The Lean source certificate is also available directly as
\href{https://www.dropbox.com/scl/fi/foq7hpfwfd0sabm9ot462/SubcriticalFourPointCertificate.lean?rlkey=smt87jkdex7bkhezdrb942j8h\&dl=0}{\texttt{SubcriticalFourPointCertificate.lean}};
the pinned \texttt{lean-toolchain}, \texttt{lakefile.lean}, and
\texttt{lake-manifest.json} are included in the complete supplement.

The supplement contains:
\begin{enumerate}[leftmargin=2.2em]
\item \path{arb_fourier_domination_certificate.py}, proving
      parts~\ref{cert:gap}--\ref{cert:junction};
\item \path{arb_fourier_overlap_certificate.py}, proving
      part~\ref{cert:smooth-overlap};
\item \path{arb_linear_gauge_certificate.py}, proving the primary
      assertions in~\ref{cert:linear-gauge};
\item \path{arb_multiplier_linear_independent_check.py}, the independent
      projector/phase cross-check;
\item \path{SubcriticalFourPointCertificate.lean}, checking the exact
      cube-root bounds, geometry, determinant margin, scalar gap and junction
      implications, endpoint argument, and rational cone and gauge arithmetic.
\end{enumerate}

The Python runs use CPython 3.14.6, \texttt{python-flint 0.8.0}, FLINT/Arb
3.3.1, and 160-bit precision.  The Lean file compiles with Lean 4.28.0 and
Mathlib 4.28.0
without \texttt{sorry}, \texttt{admit}, \texttt{native\_decide}, or
user-declared axioms.  Its printed assumptions are only the standard
Lean/Mathlib foundations \texttt{propext}, \texttt{Classical.choice}, and
\texttt{Quot.sound}.

The Lean artifact is an algebraic corroboration, not an end-to-end
formalization of \cref{thm:main}.  It does not formalize Arb's implementation,
the vector Zak transform, the graph-transform regularity theorem, winding
classes, Fourier cohomology, or Schwartz reconstruction.  Those steps are
proved conventionally above.  This boundary prevents computational premises
or analytic glue from being hidden behind formal axioms.

\section*{Acknowledgments}

The author gratefully credits Markus Faulhuber, Philipp Petersen,
Jordy Timo van Velthoven, and Felix Voigtlaender for the twelve-point
counterexample \cite{FPPV2026}, whose vector-Zak and cohomological strategy is
the foundation of this work.  The author also thanks the developers of Arb,
FLINT, Lean, and Mathlib for the validated-computation and
formal-verification infrastructure \cite{johansson2017arb,Lean4,Mathlib}.
This manuscript was written in collaboration with ChatGPT (OpenAI).
ChatGPT assisted with mathematical exploration, implementation and checking
of the Arb and Lean companions, adversarial proof auditing, and the drafting
and revision of the exposition.  No language-model output is used as a
mathematical premise: the claims rest on the arguments and reproducible
certificates presented in the paper and supplement.  ChatGPT is not an author;
the human author remains responsible for every mathematical claim and for the
final submitted text.
Aristotle was not used as an independent verifier; the supplement includes
only a task brief for possible further formalization.

\bibliographystyle{amsplain}
\bibliography{references}

@article{HRT,
  author  = {Heil, Christopher and Ramanathan, Jayakumar and Topiwala, Pankaj},
  title   = {Linear independence of time-frequency translates},
  journal = {Proc. Amer. Math. Soc.},
  volume  = {124},
  number  = {9},
  pages   = {2787--2795},
  year    = {1996},
  doi     = {10.1090/S0002-9939-96-03346-1}
}

@misc{FPPV2026,
  author        = {Faulhuber, Markus and Petersen, Philipp and
                   van Velthoven, Jordy Timo and Voigtlaender, Felix},
  title         = {Linear Dependence of Time--Frequency Shifts of a
                   {Schwartz} Function},
  year          = {2026},
  eprint        = {2608.05044},
  archivePrefix = {arXiv},
  primaryClass  = {math.FA},
  note          = {arXiv:2608.05044v1; ancillary Arb code}
}

@article{Linnell,
  author  = {Linnell, Peter A.},
  title   = {Von {Neumann} algebras and linear independence of translates},
  journal = {Proc. Amer. Math. Soc.},
  volume  = {127},
  number  = {11},
  pages   = {3269--3277},
  year    = {1999},
  doi     = {10.1090/S0002-9939-99-05102-3}
}

@article{demeter2010linear,
  author  = {Demeter, Ciprian},
  title   = {Linear independence of time frequency translates for special
             configurations},
  journal = {Math. Res. Lett.},
  volume  = {17},
  number  = {4},
  pages   = {761--779},
  year    = {2010},
  doi     = {10.4310/MRL.2010.v17.n4.a14}
}

@article{demeter2012proof,
  author  = {Demeter, Ciprian and Zaharescu, Alexandru},
  title   = {Proof of the {HRT} conjecture for $(2,2)$ configurations},
  journal = {J. Math. Anal. Appl.},
  volume  = {388},
  number  = {1},
  pages   = {151--159},
  year    = {2012},
  doi     = {10.1016/j.jmaa.2011.11.030}
}

@incollection{heil2006linear,
  author    = {Heil, Christopher},
  title     = {Linear independence of finite {Gabor} systems},
  booktitle = {Harmonic Analysis and Applications},
  pages     = {171--206},
  publisher = {Birkh{\"a}user},
  address   = {Basel},
  year      = {2006},
  doi       = {10.1007/0-8176-4504-7_9}
}

@article{Liu2019,
  author  = {Liu, Wencai},
  title   = {Letter to the Editor: Proof of the {HRT} Conjecture for Almost
             Every $(1,3)$ Configuration},
  journal = {J. Fourier Anal. Appl.},
  volume  = {25},
  number  = {4},
  pages   = {1350--1360},
  year    = {2019},
  doi     = {10.1007/s00041-018-9628-0}
}

@article{OkoudjouOussa2025,
  author  = {Okoudjou, Kasso A. and Oussa, Vignon},
  title   = {Letter to the Editor: On a Special Configuration for the
             {HRT} Conjecture},
  journal = {J. Fourier Anal. Appl.},
  volume  = {31},
  number  = {4},
  pages   = {Paper No. 48, 2},
  year    = {2025},
  doi     = {10.1007/s00041-025-10181-8}
}

@misc{Oussa2026,
  author        = {Oussa, Vignon},
  title         = {Lean-Certified Four-Point {HRT} Results for Three Lattice
                   Points and One Off-Lattice Point},
  year          = {2026},
  eprint        = {2604.21228},
  archivePrefix = {arXiv},
  primaryClass  = {math.FA},
  note          = {arXiv:2604.21228v1}
}

@unpublished{OussaBook2026,
  author = {Oussa, Vignon},
  title  = {One Rogue Point: The {HRT} Conjecture Beyond Lattices},
  note   = {Book manuscript},
  year   = {2026}
}

@article{johansson2017arb,
  author  = {Johansson, Fredrik},
  title   = {Arb: Efficient Arbitrary-Precision Midpoint-Radius Interval
             Arithmetic},
  journal = {IEEE Trans. Comput.},
  volume  = {66},
  number  = {8},
  pages   = {1281--1292},
  year    = {2017},
  doi     = {10.1109/TC.2017.2690633}
}

@book{moore2009,
  author    = {Moore, Ramon E. and Kearfott, R. Baker and Cloud, Michael J.},
  title     = {Introduction to Interval Analysis},
  publisher = {Society for Industrial and Applied Mathematics},
  address   = {Philadelphia, PA},
  year      = {2009},
  doi       = {10.1137/1.9780898717716}
}

@article{rump2010,
  author  = {Rump, Siegfried M.},
  title   = {Verification methods: Rigorous results using floating-point
             arithmetic},
  journal = {Acta Numer.},
  volume  = {19},
  pages   = {287--449},
  year    = {2010},
  doi     = {10.1017/S096249291000005X}
}

@article{ZibZee97,
  author  = {Zibulski, Meir and Zeevi, Yehoshua Y.},
  title   = {Analysis of multiwindow {Gabor}-type schemes by frame methods},
  journal = {Appl. Comput. Harmon. Anal.},
  volume  = {4},
  number  = {2},
  pages   = {188--221},
  year    = {1997},
  doi     = {10.1006/acha.1997.0209}
}

@book{Gro01,
  author    = {Gr{\"o}chenig, Karlheinz},
  title     = {Foundations of Time-Frequency Analysis},
  series    = {Applied and Numerical Harmonic Analysis},
  publisher = {Birkh{\"a}user},
  address   = {Boston, MA},
  year      = {2001}
}

@inproceedings{Lean4,
  author    = {de Moura, Leonardo and Ullrich, Sebastian},
  title     = {The {Lean 4} Theorem Prover and Programming Language},
  booktitle = {Automated Deduction---{CADE 28}},
  series    = {Lecture Notes in Computer Science},
  volume    = {12699},
  pages     = {625--635},
  publisher = {Springer},
  year      = {2021},
  doi       = {10.1007/978-3-030-79876-5_37}
}

@inproceedings{Mathlib,
  author    = {{The mathlib Community}},
  title     = {The {Lean} Mathematical Library},
  booktitle = {Proceedings of the 9th {ACM SIGPLAN} International
               Conference on Certified Programs and Proofs},
  pages     = {367--381},
  publisher = {Association for Computing Machinery},
  year      = {2020},
  doi       = {10.1145/3372885.3373824}
}

@book{HPS,
  author    = {Hirsch, Morris W. and Pugh, Charles C. and Shub, Michael},
  title     = {Invariant Manifolds},
  series    = {Lecture Notes in Mathematics},
  volume    = {583},
  publisher = {Springer},
  address   = {Berlin},
  year      = {1977},
  doi       = {10.1007/BFb0092042}
}

@article{BochiGourmelon,
  author  = {Bochi, Jairo and Gourmelon, Nicolas},
  title   = {Some characterizations of domination},
  journal = {Math. Z.},
  volume  = {263},
  number  = {1},
  pages   = {221--231},
  year    = {2009},
  doi     = {10.1007/s00209-008-0414-y}
}

\end{document}